\theoremstyle{plain}% Theorem-like structures
\newtheorem{theorem}{Theorem}[section]
\newtheorem{corollary}[theorem]{Corollary}
\newtheorem{lemma}[theorem]{Lemma}
\newtheorem{proposition}[theorem]{Proposition}
\theoremstyle{definition}
\newtheorem{definition}[theorem]{Definition}
\newtheorem{example}[theorem]{Example}
\theoremstyle{remark}
\newtheorem{remark}[theorem]{Remark}
\DeclareSymbolFont{pxfontssymbolsC}{U}{pxsyc}{m}{n}
\DeclareMathSymbol{\coloneqq}{\mathrel}{pxfontssymbolsC}{66}
\definecolor{afcol}{rgb}{1,0,0}
\begin{document}

%\jvol{00} \jnum{00} \jyear{2015} \jmonth{February}

\title{Series representations for fractional-calculus operators involving generalised Mittag-Leffler functions}

\date{}

\author[1]{Arran Fernandez\thanks{Corresponding author. Email: \texttt{af454@cam.ac.uk}}}
\author[2,3]{Dumitru Baleanu\thanks{Email: \texttt{dumitru@cankaya.edu.tr}}}
\author[4,5]{H. M. Srivastava\thanks{Email: \texttt{harimsri@math.uvic.ca}}}

\affil[1]{{\small Department of Applied Mathematics and Theoretical Physics, University of Cambridge, Wilberforce Road, CB3 0WA, United Kingdom}}
\affil[2]{{\small Department of Mathematics, Cankaya University, 06530 Balgat, Ankara, Turkey}}
\affil[3]{{\small Institute of Space Sciences, Magurele-Bucharest, Romania}}
\affil[4]{{\small Department of Mathematics and Statistics, University of Victoria, Victoria, British Columbia V8W 3R4, Canada}}
\affil[5]{{\small Department of Medical Research, China Medical University Hospital, China Medical University, Taichung 40402, Taiwan, Republic of China}}

\maketitle

\begin{abstract}
We consider an integral transform introduced by Prabhakar, involving generalised multi-parameter Mittag-Leffler functions, which can be used to introduce and investigate several different models of fractional calculus. We derive a new series expression for this transform, in terms of classical Riemann--Liouville fractional integrals, and use it to obtain or verify series formulae in various specific cases corresponding to different fractional-calculus models. We demonstrate the power of our result by applying the series formula to derive analogues of the product and chain rules in more general fractional contexts. We also discuss how the Prabhakar model can be used to explore the idea of fractional iteration in connection with semigroup properties.
\end{abstract}

\section{Introduction}

Fractional calculus -- the generalisation of standard calculus to non-integer orders of differentiation and integration -- has, in its various forms, been an object of speculation and application for hundreds of years \cite{samko-kilbas-marichev,miller-ross,oldham-spanier,podlubny,kilbas-srivastava-trujillo}. Many different models of fractional calculus have been developed for different purposes, using different formulae for fractional differentiation and integration according to which one is most suited for a particular situation \cite{hilfer,atangana,west}.

The classical and most commonly-used model is the \textbf{Riemann--Liouville} one, in which fractional integrals are defined by
\begin{equation}
\label{RLdef:int}
\prescript{RL}{}I_{c+}^{\alpha}f(x)\coloneqq\frac{1}{\Gamma(\alpha)}\int_c^x(x-t)^{\alpha-1}f(t)\,\mathrm{d}t,\quad\mathrm{Re}(\alpha)>0,
\end{equation}
and fractional derivatives by
\begin{equation}
\label{RLdef:deriv}
\prescript{RL}{}D_{c+}^{\alpha}f(x)\coloneqq\frac{\mathrm{d}^n}{\mathrm{d}x^n}\Big(\prescript{RL}{}I_{c+}^{n-\alpha}f(x)\Big),\quad\mathrm{Re}(\alpha)\geq0,n\coloneqq\lfloor\mathrm{Re}(\alpha)\rfloor+1.
\end{equation}
In both cases, $\alpha\in\mathbb{C}$ is the order of differintegration (the term ``differintegration" is commonly used in fractional calculus to denote the general operation which could be either differentiation or integration according to sign) and $c$ is an arbitrary constant, the fractional analogue of a constant of integration, which is usually taken to be either $0$ or $-\infty$. The Riemann--Liouville model makes sense to use because it combines naturally together with Fourier transforms and \eqref{RLdef:int} is the natural extension of the Cauchy formula for repeated integrals. It has found diverse applications including, for example, in control theory \cite{petras}, viscoelasticity \cite{bagley}, and bioengineering \cite{magin}.

A slight modification of Riemann--Liouville is the \textbf{Caputo} model due to \cite{caputo}, in which fractional integrals are again defined by \eqref{RLdef:int} but fractional derivatives are now defined by
\begin{equation}
\label{CAPdef:deriv}
\prescript{C}{}D_{c+}^{\alpha}f(x)\coloneqq\prescript{RL}{}I_{c+}^{n-\alpha}\Big(\frac{\mathrm{d}^n}{\mathrm{d}x^n}f(x)\Big),\quad\mathrm{Re}(\alpha)\geq0,n\coloneqq\lfloor\mathrm{Re}(\alpha)\rfloor+1.
\end{equation}
This model has advantages over the Riemann--Liouville one in many applications involving initial value problems, because of the differentiation being inside the integral instead of outside; this also means that the derivative of a constant is zero. It too has found applications in many fields of science \cite{mainardi,podlubny}.

The difference between the RL and Caputo models arises from the fact that integral and differential operators do not commute. This fact can be formalised by the following theorem \cite{miller-ross,samko-kilbas-marichev}.

\begin{lemma}
\label{RLcomp}
If $f$ is an $L^1$ function, then for any $\alpha,\beta\in\mathbb{C}$ with $\mathrm{Re}(\beta)>0$, we have
\begin{equation}
\label{RLcomp:int}
\prescript{RL}{}I_{c+}^{\alpha}\prescript{RL}{}I_{c+}^{\beta}f=\prescript{RL}{}I_{c+}^{\alpha+\beta}f.
\end{equation}
If in addition $f$ is a $C^n$ function for some $n\in\mathbb{N}$, then we have
\begin{equation}
\label{RLcomp:deriv}
\frac{\mathrm{d}^n}{\mathrm{d}x^n}\left(\prescript{RL}{}D_{c+}^{\alpha}f(x)\right)=\prescript{RL}{}D_{c+}^{\alpha+n}f(x)=\prescript{RL}{}D_{c+}^{\alpha}f^{(n)}(x)+\sum_{k=1}^n\frac{(x-c)^{-\alpha-k}}{\Gamma(1-\alpha-k)}f^{(n-k)}(c).
\end{equation}
\end{lemma}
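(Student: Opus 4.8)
The plan is to handle the two assertions by separate mechanisms: the integral composition law \eqref{RLcomp:int} reduces to interchanging the order of integration, whereas the derivative identity \eqref{RLcomp:deriv} is driven by a single integration-by-parts relation promoted to general order by induction.

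For \eqref{RLcomp:int}, I would first treat the case $\mathrm{Re}(\alpha)>0$, where both operators are genuine integrals. Substituting the definition \eqref{RLdef:int} into itself produces the double integral $\frac{1}{\Gamma(\alpha)\Gamma(\beta)}\int_c^x(x-t)^{\alpha-1}\big(\int_c^t(t-s)^{\beta-1}f(s)\,\mathrm{d}s\big)\,\mathrm{d}t$. Because $f\in L^1$ and the two kernels are locally integrable, Tonelli/Fubini justifies swapping the $s$- and $t$-integrations; the resulting inner integral $\int_s^x(x-t)^{\alpha-1}(t-s)^{\beta-1}\,\mathrm{d}t$ is evaluated by the substitution $t=s+(x-s)u$ to give $(x-s)^{\alpha+\beta-1}B(\alpha,\beta)$. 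Inserting $B(\alpha,\beta)=\Gamma(\alpha)\Gamma(\beta)/\Gamma(\alpha+\beta)$ collapses everything into precisely $\prescript{RL}{}I_{c+}^{\alpha+\beta}f(x)$. For the remaining range $\mathrm{Re}(\alpha)\le0$, in which $\prescript{RL}{}I_{c+}^{\alpha}$ stands for the differintegral, I would reduce to the positive case by choosing an integer $p$ with $\mathrm{Re}(\alpha+p)>0$ and writing $\prescript{RL}{}I_{c+}^{\alpha}=\frac{\mathrm{d}^p}{\mathrm{d}x^p}\prescript{RL}{}I_{c+}^{\alpha+p}$; the already-proven law then gives $\prescript{RL}{}I_{c+}^{\alpha+p}\prescript{RL}{}I_{c+}^{\beta}f=\prescript{RL}{}I_{c+}^{\alpha+p+\beta}f$, and applying $\frac{\mathrm{d}^p}{\mathrm{d}x^p}$ recovers $\prescript{RL}{}I_{c+}^{\alpha+\beta}f$ after checking that $\frac{\mathrm{d}^p}{\mathrm{d}x^p}\prescript{RL}{}I_{c+}^{p}$ acts as the identity.

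For \eqref{RLcomp:deriv}, the first equality is purely definitional. Setting $m\coloneqq\lfloor\mathrm{Re}(\alpha)\rfloor+1$, one has $\lfloor\mathrm{Re}(\alpha+n)\rfloor+1=m+n$, so both $\frac{\mathrm{d}^n}{\mathrm{d}x^n}\prescript{RL}{}D_{c+}^{\alpha}f$ and $\prescript{RL}{}D_{c+}^{\alpha+n}f$ unwind to $\frac{\mathrm{d}^{m+n}}{\mathrm{d}x^{m+n}}\prescript{RL}{}I_{c+}^{m-\alpha}f$. The real content is the second equality, which I would first prove for $n=1$. The engine is the identity $\prescript{RL}{}I_{c+}^{\beta}f'=\frac{\mathrm{d}}{\mathrm{d}x}\prescript{RL}{}I_{c+}^{\beta}f-\frac{(x-c)^{\beta-1}}{\Gamma(\beta)}f(c)$, which I would establish for all $\mathrm{Re}(\beta)>0$ by the substitution $\tau=x-t$ (moving the $x$-dependence into the integrand and the upper limit) followed by differentiation under the integral sign, legitimate since $\tau^{\beta-1}$ is integrable near $0$. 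Applying $\frac{\mathrm{d}^m}{\mathrm{d}x^m}$ to this identity with $\beta=m-\alpha$, recognising $\frac{\mathrm{d}^{m+1}}{\mathrm{d}x^{m+1}}\prescript{RL}{}I_{c+}^{m-\alpha}f=\prescript{RL}{}D_{c+}^{\alpha+1}f$, and using $\frac{\mathrm{d}^m}{\mathrm{d}x^m}(x-c)^{m-\alpha-1}=\frac{\Gamma(m-\alpha)}{\Gamma(-\alpha)}(x-c)^{-\alpha-1}$, produces exactly the $n=1$ case. The general case then follows by induction: I would write $\prescript{RL}{}D_{c+}^{\alpha+(n+1)}f=\prescript{RL}{}D_{c+}^{(\alpha+1)+n}f$, apply the inductive hypothesis with $\alpha$ shifted to $\alpha+1$, apply the $n=1$ identity to the leading term $\prescript{RL}{}D_{c+}^{\alpha+1}f^{(n)}$, and re-index the two resulting boundary sums so that they merge into $\sum_{k=1}^{n+1}$ with the stated Gamma-function coefficients.

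I expect the genuine obstacles to be analytic rather than combinatorial. In \eqref{RLcomp:int}, the care points are justifying the interchange of integration from $f\in L^1$ alone and verifying that the reduction for $\mathrm{Re}(\alpha)\le0$ meshes with the floor-function definition of the differintegral. In \eqref{RLcomp:deriv}, the crux is the differentiation-under-the-integral step in the basic identity: for $0<\mathrm{Re}(\beta)\le1$ the kernel $(x-t)^{\beta-1}$ is singular at the endpoint $t=x$, so this needs the $C^n$ hypothesis together with a dominated-convergence argument rather than a naive boundary-term cancellation. Once that identity is secured, the remaining bookkeeping of coefficients and the index shift in the induction are routine.
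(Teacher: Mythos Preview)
The paper does not actually prove this lemma; it states it with a citation to the standard references \cite{miller-ross,samko-kilbas-marichev} and moves on. Your sketch is essentially the textbook argument one finds there: Fubini plus the beta-integral evaluation for \eqref{RLcomp:int}, and the integration-by-parts identity $\prescript{RL}{}I_{c+}^{\beta}f'=\frac{\mathrm{d}}{\mathrm{d}x}\prescript{RL}{}I_{c+}^{\beta}f-\frac{(x-c)^{\beta-1}}{\Gamma(\beta)}f(c)$ lifted by induction for \eqref{RLcomp:deriv}. So there is no paper-specific proof to compare against, but your plan is correct and matches the standard treatment; the analytic caveats you flag (justifying Fubini from $f\in L^1$ alone, and the endpoint singularity when $0<\mathrm{Re}(\beta)\le1$) are exactly the points those references spend their effort on.
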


The terms \textit{differintegral} and \textit{differintegration} are commonly used in fractional calculus to denote the generalised operation which could be either differentiation or integration according to the sign of the real part of the order.

An operator introduced by Prabhakar in 1971 \cite{prabhakar} for solving a particular singular integral equation has also been adapted as a fractional differintegral operator \cite{kilbas-saigo-saxena}, and its properties and applications have been investigated in papers such as \cite{kilbas-saigo-saxena,garra-garrappa,giusti-colombaro}. The \textbf{Prabhakar} fractional integral is defined by
\begin{equation}
\label{PRABdef:int}
\mathcal{E}_{\alpha,\beta;c+}^{\omega,\rho}f(x)\coloneqq\int_c^x(x-t)^{\beta-1}E_{\alpha,\beta}^{\rho}\left[\omega(x-t)^{\alpha}\right]f(t)\,\mathrm{d}t,\quad\mathrm{Re}(\alpha)>0,\mathrm{Re}(\beta)>0,
\end{equation}
where the generalised Mittag-Leffler function $E_{\alpha,\beta}^{\rho}$ is defined by
\begin{equation}
\label{GMLdef}
E_{\alpha,\beta}^{\rho}(z)\coloneqq\sum_{n=0}^{\infty}\frac{(\rho)_nz^n}{\Gamma(\alpha n+\beta)n!}=\sum_{n=0}^{\infty}\frac{\Gamma(\rho+n)z^n}{\Gamma(\rho)\Gamma(\alpha n+\beta)n!}.
\end{equation}
The Prabhakar integral is known \cite{kilbas-saigo-saxena} to be a bounded operator on $L^1$ functions. It has also been shown \cite{kilbas-saigo-saxena} that its left inverse can be given by any expression of the following form:
\begin{equation}
\label{PRAB:inverse}
\left[\mathcal{E}_{\alpha,\beta;c+}^{\omega,\rho}\right]^{-1}f(x)=\prescript{RL}{}D_{c+}^{\beta+\gamma}\mathcal{E}_{\alpha,\gamma;c+}^{\omega,-\rho}f(x),\quad\gamma\in\mathbb{C},\mathrm{Re}(\gamma)>0.
\end{equation}
In particular, and by analogy with the definition \eqref{RLdef:deriv} of Riemann--Liouville fractional derivatives, we can define the Prabhakar fractional differential operator as follows:
\begin{equation}
\label{PRABdef:deriv}
\mathcal{D}_{\alpha,\beta;c+}^{\omega,\rho}f(x)\coloneqq\frac{\mathrm{d}^m}{\mathrm{d}x^m}\mathcal{E}_{\alpha,m-\beta;c+}^{\omega,-\rho}f(x),\quad\mathrm{Re}(\alpha)>0,\mathrm{Re}(\beta)>0,m\coloneqq\lfloor\mathrm{Re}(\beta)\rfloor+1.
\end{equation}
It is also possible \cite{garra-gorenflo-polito-tomovski} to define a Prabhakar fractional derivative of Caputo type, in a way analogous to \eqref{CAPdef:deriv}. But here we only need to consider the Prabhakar fractional derivative of Riemann--Liouville type, as defined by \eqref{PRABdef:deriv}.

The Prabhakar operator has also been extended and generalised still further \cite{srivastava-tomovski,garra-gorenflo-polito-tomovski}. The \textbf{generalised Prabhakar} fractional integral is defined by
\begin{multline}
\label{GPdef:int}
\mathcal{E}_{\alpha,\beta;c+}^{\omega,\rho,\kappa}f(x)\coloneqq\int_c^x(x-t)^{\beta-1}E_{\alpha,\beta}^{\rho,\kappa}\left[\omega(x-t)^{\alpha}\right]f(t)\,\mathrm{d}t,\\ \min\left(\mathrm{Re}(\alpha),\mathrm{Re}(\beta),\mathrm{Re}(\kappa)\right)>0,\mathrm{Re}(\kappa-\alpha)<1,
\end{multline}
where the generalised Mittag-Leffler function $E_{\alpha,\beta}^{\rho,\kappa}$ is defined by
\begin{equation}
\label{GGMLdef}
E_{\alpha,\beta}^{\rho,\kappa}(z)\coloneqq\sum_{n=0}^{\infty}\frac{(\rho)_{\kappa n}z^n}{\Gamma(\alpha n+\beta)n!}=\sum_{n=0}^{\infty}\frac{\Gamma(\rho+\kappa n)z^n}{\Gamma(\rho)\Gamma(\alpha n+\beta)n!}.
\end{equation}
The generalised Prabhakar integral is known \cite{srivastava-tomovski} to be a bounded operator on $L^1$ functions.

We recall that the Mittag-Leffler function is known to be very significant in fractional calculus \cite{mainardi-gorenflo,mathai-haubold,tomovski-hilfer-srivastava,srivastava}, and its properties have been exhaustively studied in this connection \cite{gorenflo-kilbas-mainardi-rogosin,haubold-mathai-saxena}. Prabhakar operators in particular have already found applications in diffusion, relaxation, and stochastic processes \cite{sandev,garrappa,garrappa-mainardi-maione,polito-tomovski}.

More recently still, other models of fractional calculus have been proposed which are less general than the Prabhakar model but easier to compute and with more direct applications.

In the \textbf{Atangana--Baleanu} model, introduced in \cite{atangana-baleanu}, fractional derivatives are defined either in a Riemann--Liouville sense by
\begin{equation}
\label{ABRdef:deriv}
\prescript{ABR}{}D_{c+}^{\alpha}f(x)\coloneqq\frac{B(\alpha)}{1-\alpha}\frac{\mathrm{d}}{\mathrm{d}x}\int_c^xE_{\alpha}\left(\frac{-\alpha}{1-\alpha}(x-t)^{\alpha}\right)f(t)\,\mathrm{d}t,\quad0<\alpha<1,
\end{equation}
or in a Caputo sense by
\begin{equation}
\label{ABCdef:deriv}
\prescript{ABC}{}D_{c+}^{\alpha}f(x)\coloneqq\frac{B(\alpha)}{1-\alpha}\int_c^xE_{\alpha}\left(\frac{-\alpha}{1-\alpha}(x-t)^{\alpha}\right)f'(t)\,\mathrm{d}t,\quad0<\alpha<1,
\end{equation}
while fractional integrals are defined by
\begin{equation}
\label{ABdef:int}
\prescript{AB}{}I_{c+}^{\alpha}f(x)\coloneqq\frac{1-\alpha}{B(\alpha)}f(x)+\frac{\alpha}{B(\alpha)}\prescript{RL}{}I_{c+}^{\alpha}f(x),\quad0<\alpha<1.
\end{equation}
In each case, $B(\alpha)$ is a multiplier function satisfying $B(0)=B(1)=1$ and assumed \cite{baleanu-fernandez} to be real and positive, while $E_{\alpha}$ is the standard Mittag-Leffler function defined by
\begin{equation}
\label{MLdef}
E_{\alpha}(z)\coloneqq\sum_{n=0}^{\infty}\frac{z^n}{\Gamma(\alpha n+1)}=E_{\alpha,1}^{1,1}(z).
\end{equation}
The AB model was motivated by similar considerations as the Caputo--Fabrizio one \cite{caputo-fabrizio}, with special consideration of modelling the dynamics of non-local systems in ways that could not be done using the classical definitions of fractional calculus. Its applications to various fields of science have been explored in \cite{atangana-baleanu,atangana,alkahtani,srivastava-saad}, and its theoretical side has been developed in \cite{abdeljawad-baleanu,fernandez-baleanu2,djida-atangana-area}.

Starting from the AB model, the first two authors have also defined a two-parameter fractional differintegral operator \cite{fernandez-baleanu}. In the \textbf{iterated AB} model, fractional differintegrals are defined by
\begin{equation}
\label{IABdef}
\prescript{IAB}{}I_{c+}^{\alpha,\rho}f(x)\coloneqq\sum_{n=0}^{\infty}\frac{\binom{\rho}{n}(1-\alpha)^{\rho-n}\alpha^n}{B(\alpha)^{\rho}}\prescript{RL}{}I_{c+}^{\alpha n}f(x),\quad0\leq\alpha\leq1,\rho\in\mathbb{R}.
\end{equation}
This can be thought of as the $\rho$th iteration of the $\alpha$th AB fractional integral: in some sense, \eqref{IABdef} equates to \[\prescript{IAB}{}I_{c+}^{\alpha,\rho}f(x)=\left(\prescript{AB}{}I_{c+}^{\alpha}\right)^{\rho}f(t)\]

In the current work, we take inspiration from existing results on the AB model \cite{baleanu-fernandez} to prove analogous results for the Prabhakar and generalised Prabhakar models: specifically, a series formula for these fractional differintegral operators in terms of only classical (RL) fractional integrals. We demonstrate how this series formula can be used, both to give quick alternative proofs for several known results on Prabhakar differintegrals, and also to derive new results in the Prabhakar model such as analogues of the product rule and chain rule. We also expect that the series formula will make numerical computation of Prabhakar derivatives easier than before, since now all that needs to be considered is Riemann--Liouville fractional integrals, not special functions such as the generalised Mittag-Leffler function.

We also demonstrate how these new results for the Prabhakar model lead to similar results for all of the other models of fractional calculus considered above. The CF, AB, and iterated AB models can all be seen as special cases of Prabhakar, which means that any theorem proved in the Prabhakar model automatically gives rise to new results in these other models, which can be directly applied to various real-world problems.

Finally, inspired by \cite{fernandez-baleanu} and the definition of the iterated AB model, we consider what happens when we try to iterate the Prabhakar fractional differintegral, and how to fit this idea into the existing framework of the Prabhakar model.

\section{Series expressions}

In this section, we prove a series formula for the generalised Prabhakar integral defined by \eqref{GPdef:int}, and use this to deduce analogous formulae for the more specific Prabhakar and AB differintegrals, as well as to re-derive various fundamental results about the generalised Prabhakar operators.

\begin{theorem}
\label{GPseries}
Under the conditions \eqref{GGML:conditions} on the parameters $\alpha,\beta,\omega,\rho,\kappa$, for any interval $(c,d)\subset\mathbb{R}$ and any function $f\in L^1(c,d)$, the generalised Prabhakar operator can be written as
\begin{equation}
\label{GPseries:eqn}
\mathcal{E}_{\alpha,\beta;c+}^{\omega,\rho,\kappa}f(x)=\sum_{n=0}^{\infty}\frac{\Gamma(\rho+\kappa n)\omega^n}{\Gamma(\rho)n!}\prescript{RL}{}I_{c+}^{\alpha n+\beta}f(x),
\end{equation}
where the series on the right-hand side is locally uniformly convergent.
\end{theorem}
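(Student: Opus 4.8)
The plan is to start from the integral definition \eqref{GPdef:int} of the generalised Prabhakar operator and substitute the series \eqref{GGMLdef} for the generalised Mittag-Leffler function $E_{\alpha,\beta}^{\rho,\kappa}$ directly into the integrand. After substituting $z=\omega(x-t)^{\alpha}$, the integrand becomes a product of $(x-t)^{\beta-1}$, a power series in $(x-t)^{\alpha}$, and $f(t)$; collecting powers of $(x-t)$ shows that the $n$th term of the series involves $(x-t)^{\alpha n+\beta-1}$, which is precisely the kernel of a Riemann--Liouville integral of order $\alpha n+\beta$. First I would write the partial sums of the series and compute, term by term, that
\begin{equation*}
\int_c^x (x-t)^{\alpha n+\beta-1}f(t)\,\mathrm{d}t=\Gamma(\alpha n+\beta)\,\prescript{RL}{}I_{c+}^{\alpha n+\beta}f(x),
\end{equation*}
using the definition \eqref{RLdef:int}. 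The factor $\Gamma(\alpha n+\beta)$ in the denominator of \eqref{GGMLdef} then cancels against the $\Gamma(\alpha n+\beta)$ produced here, leaving exactly the coefficients $\frac{\Gamma(\rho+\kappa n)\omega^n}{\Gamma(\rho)n!}$ of \eqref{GPseries:eqn}. The algebraic bookkeeping here is routine; the substance of the proof is justifying that the interchange of summation and integration is legitimate.

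The main obstacle, then, is rigorously justifying the term-by-term integration and establishing the locally uniform convergence claimed in the statement. I would handle both at once by bounding the series absolutely and uniformly on compact subintervals. For $x$ ranging over a compact $[c,d']\subset(c,d)$, set $R=d'-c$; then $|x-t|\leq R$ for $t\in[c,x]$, so each kernel term is dominated by $|(x-t)^{\alpha n+\beta-1}|\leq R^{\mathrm{Re}(\alpha n+\beta)-1}$ (up to the behaviour near the singularity $t=x$, which is integrable since $\mathrm{Re}(\beta)>0$ and $\mathrm{Re}(\alpha)>0$ keep all exponents $\alpha n+\beta-1$ with real part exceeding $-1$). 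The key analytic input is an asymptotic estimate for the coefficients $\frac{|\Gamma(\rho+\kappa n)|}{|\Gamma(\rho)|\,n!}$: using Stirling's formula one sees that these grow no faster than a constant times $\frac{n^{\mathrm{Re}(\kappa)n}}{n!}$ up to polynomial and exponential factors, and since the condition $\mathrm{Re}(\kappa-\alpha)<1$ from \eqref{GGML:conditions} controls the competition between this numerator growth and the decay supplied by $\frac{1}{\Gamma(\alpha n+\beta)}$ (or equivalently by the $R^{\alpha n}$ factors once the Gamma functions are reorganised), the resulting majorant series converges. This is essentially the same entire-function estimate that guarantees $E_{\alpha,\beta}^{\rho,\kappa}$ is well-defined under the stated constraints.

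Concretely, I would assemble a convergent numerical majorant $\sum_{n=0}^{\infty} M_n$ with $M_n$ independent of $x\in[c,d']$ dominating the $L^1$-norm contribution of each term, namely $M_n=\frac{|\Gamma(\rho+\kappa n)|\,|\omega|^n R^{\mathrm{Re}(\alpha n+\beta)}}{|\Gamma(\rho)|\,n!\,\mathrm{Re}(\alpha n+\beta)}\,\|f\|_{L^1(c,d)}$, and verify $\sum_n M_n<\infty$ via the ratio test combined with the Stirling estimate above. The Weierstrass $M$-test then yields locally uniform convergence of the series in \eqref{GPseries:eqn}, and the domination simultaneously licenses the interchange of $\sum$ and $\int$ by the dominated convergence theorem (or by Fubini--Tonelli applied to the joint sum-integral). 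Once the interchange is justified, the term-by-term identification carried out in the first paragraph completes the proof. I expect the convergence estimate to be the only genuinely delicate point; the rest is formal manipulation that follows directly from the definitions \eqref{RLdef:int}, \eqref{GPdef:int}, and \eqref{GGMLdef}.
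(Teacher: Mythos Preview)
Your approach is essentially the paper's: substitute the series \eqref{GGMLdef} into the integral \eqref{GPdef:int}, interchange sum and integral, and identify each term as a Riemann--Liouville integral. The only difference is that the paper dispatches the convergence issue in one line by citing the known local uniform convergence of the generalised Mittag-Leffler series under \eqref{GGML:conditions}, whereas you propose to redo that estimate explicitly via Stirling and the Weierstrass $M$-test; that is a reasonable way to make the argument self-contained.

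One bookkeeping slip to fix: your displayed majorant $M_n$ omits the factor $1/|\Gamma(\alpha n+\beta)|$ that comes from the definition of $\prescript{RL}{}I_{c+}^{\alpha n+\beta}$. Without that factor $\sum_n M_n$ need not converge under \eqref{GGML:conditions} alone (e.g.\ take $\kappa$ real with $\kappa>1$, which is allowed whenever $\mathrm{Re}(\alpha)>\kappa-1$). You correctly note in the surrounding text that the decay of $1/\Gamma(\alpha n+\beta)$ is precisely what offsets the growth of $\Gamma(\rho+\kappa n)$ and makes the condition $\mathrm{Re}(\kappa-\alpha)<1$ relevant, so this is an inconsistency in the written formula rather than a conceptual error. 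A smaller point: the combination $R^{\mathrm{Re}(\alpha n+\beta)}/\mathrm{Re}(\alpha n+\beta)$ arises from an $L^\infty$ bound on $f$, not an $L^1$ bound; for $f\in L^1$ and $n$ large enough that $\mathrm{Re}(\alpha n+\beta)\geq 1$, the straightforward estimate is $|\prescript{RL}{}I_{c+}^{\alpha n+\beta}f(x)|\leq R^{\mathrm{Re}(\alpha n+\beta)-1}\|f\|_{L^1}/|\Gamma(\alpha n+\beta)|$, and the finitely many terms with $\mathrm{Re}(\alpha n+\beta)<1$ can be handled separately.
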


\begin{proof}
Our starting point is the series formula \eqref{GGMLdef} for the generalised Mittag-Leffler function. This series is known \cite{kilbas-saigo-saxena,lavault} to be locally uniformly convergent in $z$, provided that
\begin{equation}
\label{GGML:conditions}
\mathrm{Re}(\alpha)>0,\quad\mathrm{Re}(\beta)>0,\quad\mathrm{Re}(\kappa)>0,\quad\mathrm{Re}(\kappa-\alpha)<1.
\end{equation}
Thus we can interchange the summation and integration in the formula \eqref{GPdef:int}, and work as follows:
\begin{align*}
\mathcal{E}_{\alpha,\beta;c+}^{\omega,\rho,\kappa}f(x)&=\int_c^x(x-t)^{\beta-1}E_{\alpha,\beta}^{\rho,\kappa}\left[\omega(x-t)^{\alpha}\right]f(t)\,\mathrm{d}t \\
&=\int_c^x(x-t)^{\beta-1}\sum_{n=0}^{\infty}\frac{(\rho)_{\kappa n}\omega^n(x-t)^{\alpha n}}{\Gamma(\alpha n+\beta)n!}f(t)\,\mathrm{d}t \\
&=\sum_{n=0}^{\infty}\int_c^x\frac{\Gamma(\rho+\kappa n)\omega^n(x-t)^{\alpha n+\beta-1}}{\Gamma(\rho)\Gamma(\alpha n+\beta)n!}f(t)\,\mathrm{d}t \\
&=\sum_{n=0}^{\infty}\frac{\Gamma(\rho+\kappa n)\omega^n}{\Gamma(\rho)n!}\cdot\frac{1}{\Gamma(\alpha n+\beta)}\int_c^x(x-t)^{\alpha n+\beta-1}f(t)\,\mathrm{d}t \\
&=\sum_{n=0}^{\infty}\frac{\Gamma(\rho+\kappa n)\omega^n}{\Gamma(\rho)n!}\prescript{RL}{}I_{c+}^{\alpha n+\beta}f(x).
\end{align*}
Now we have expressed the generalised Prabhakar operator as a series of Riemann--Liouville fractional integrals, and the result is established.
\end{proof}

\begin{remark}
An almost identical result to Theorem \ref{GPseries}, namely Corollary \ref{PRABseries} below, as well as the further Corollary \ref{ABseries}, was discovered independently in \cite{giusti}, also in 2018. Both our approach here and that used in \cite{giusti}, for Mittag-Leffler functions of 4 and 3 parameters respectively, are very similar to the approach previously used in \cite{baleanu-fernandez} for Mittag-Leffler functions of 1 parameter. We note that a recent addendum to \cite{giusti}, namely \cite{giusti2}, does cite the previous paper \cite{baleanu-fernandez}.
\end{remark}

In order to deduce similar results in other models of fractional calculus, we first note the following equivalences.

\begin{proposition}
\label{GPcases}
The Prabhakar, AB, and iterated AB models of fractional calculus can all be viewed as special cases of the generalised Prabhakar model \eqref{GPdef:int}, as follows.
\begin{align}
\label{GP:PRAB} \mathcal{E}_{\alpha,\beta;c+}^{\omega,\rho}f(x)&=\mathcal{E}_{\alpha,\beta;c+}^{\omega,\rho,1}f(x),&&\quad\mathrm{Re}(\alpha)>0,\mathrm{Re}(\beta)>0; \\
%\label{GP:CF} \prescript{CF}{}D_{c+}^{\alpha}f(x)&=\frac{M(\alpha)}{1-\alpha}\mathcal{E}_{1,1;c+}^{\frac{-\alpha}{1-\alpha},1,1}f'(x),&&\quad0<\alpha<1; \\
\label{GP:ABR} \prescript{ABR}{}D_{c+}^{\alpha}f(x)&=\frac{B(\alpha)}{1-\alpha}\cdot\frac{\mathrm{d}}{\mathrm{d}x}\mathcal{E}_{\alpha,1;c+}^{\frac{-\alpha}{1-\alpha},1,1}f(x),&&\quad0<\alpha<1; \\
\label{GP:ABC} \prescript{ABC}{}D_{c+}^{\alpha}f(x)&=\frac{B(\alpha)}{1-\alpha}\mathcal{E}_{\alpha,1;c+}^{\frac{-\alpha}{1-\alpha},1,1}f'(x),&&\quad0<\alpha<1; \\
\label{GP:IAB} \prescript{IAB}{}I_{c+}^{\alpha,\rho}f(x)&=\left(\frac{1-\alpha}{B(\alpha)}\right)^{\rho}\mathcal{E}_{\alpha,0;c+}^{\frac{\alpha}{1-\alpha},\rho+1,1}f(x),&&\quad0\leq\alpha\leq1,\rho\in\mathbb{R}.
\end{align}
\end{proposition}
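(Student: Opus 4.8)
The plan is to prove each of the four identities by specialising the generalised Prabhakar operator and comparing against the relevant definition, using only the definitions already recorded together with Theorem \ref{GPseries}; the first three are direct substitutions into the kernel, and only the last genuinely needs the series formula. For \eqref{GP:PRAB} I would set $\kappa = 1$ in the defining series \eqref{GGMLdef}: since $(\rho)_{1\cdot n} = (\rho)_n$, the function $E_{\alpha,\beta}^{\rho,1}$ collapses termwise to $E_{\alpha,\beta}^{\rho}$ of \eqref{GMLdef}, so the kernels of \eqref{GPdef:int} and \eqref{PRABdef:int} coincide identically and the operators agree. For \eqref{GP:ABR} and \eqref{GP:ABC} I would specialise \eqref{GPdef:int} to $\beta = \rho = \kappa = 1$ and $\omega = \frac{-\alpha}{1-\alpha}$; by the identity $E_\alpha(z) = E_{\alpha,1}^{1,1}(z)$ recorded in \eqref{MLdef}, the kernel $(x-t)^{\beta-1}E_{\alpha,\beta}^{\rho,\kappa}\big[\omega(x-t)^\alpha\big]$ reduces exactly to $E_\alpha\big(\tfrac{-\alpha}{1-\alpha}(x-t)^\alpha\big)$, so $\mathcal{E}_{\alpha,1;c+}^{\frac{-\alpha}{1-\alpha},1,1}$ is precisely the Mittag-Leffler convolution appearing in \eqref{ABRdef:deriv} and \eqref{ABCdef:deriv}. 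Applying $\frac{\mathrm{d}}{\mathrm{d}x}$ and the prefactor $\frac{B(\alpha)}{1-\alpha}$ then recovers ABR, while evaluating the same operator at $f'$ and multiplying by $\frac{B(\alpha)}{1-\alpha}$ recovers ABC; these are bookkeeping steps with no real obstruction.

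The substantive identity is \eqref{GP:IAB}. Here I would apply Theorem \ref{GPseries} to the generalised Prabhakar operator on the right-hand side, with second index $\beta = 0$ and $\kappa = 1$, to write it as a series whose $n$-th term is a scalar multiple of $\prescript{RL}{}I_{c+}^{\alpha n}f$, and then match this term by term against the defining series \eqref{IABdef} of the iterated AB operator. One point I would address explicitly is that $\beta = 0$ lies outside the region \eqref{GGML:conditions} (which requires $\mathrm{Re}(\beta) > 0$), so the symbol $\mathcal{E}_{\alpha,0;c+}^{\cdots}$ cannot be read through the integral \eqref{GPdef:int}; I would instead take it to be defined by the locally uniformly convergent series \eqref{GPseries:eqn}, noting that its $n = 0$ term is $\prescript{RL}{}I_{c+}^{0}f = f$ and each $n \geq 1$ term is a genuine Riemann--Liouville integral, so the interpretation is unambiguous.

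I expect the combinatorial matching to be the main obstacle. After distributing the scalar prefactor $\big(\tfrac{1-\alpha}{B(\alpha)}\big)^\rho$ and the base $\omega$ across the series, the powers $(1-\alpha)^{\rho-n}\alpha^n/B(\alpha)^\rho$ align immediately, and the whole identity reduces to reconciling the remaining factors. The iterated AB series carries the generalised falling-factorial binomial coefficient $\binom{\rho}{n}$, whereas Theorem \ref{GPseries} produces the rising-factorial Pochhammer factor $(\cdot)_n = \Gamma(\cdot + n)/\Gamma(\cdot)$ inherited from $(\rho)_{\kappa n}$ in \eqref{GGMLdef}. Bridging the two requires the reflection identity $\binom{\rho}{n} = (-1)^n (-\rho)_n/n!$, and it is exactly this conversion that pins down the sign of $\omega$ and the value of the Mittag-Leffler index in the operator; the careful tracking of signs and parameters through this step is where an error would most easily arise, so I would verify it slowly and, as a sanity check, confirm the resulting identity directly for $\rho = 1$ and $\rho = 2$, where both sides collapse to finite sums.
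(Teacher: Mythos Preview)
Your plan for \eqref{GP:PRAB}--\eqref{GP:ABC} is exactly the paper's own argument: the paper's entire proof is the single sentence ``This follows directly from comparing the definitions \eqref{PRABdef:int}, \eqref{ABRdef:deriv}, \eqref{ABCdef:deriv}, \eqref{IABdef} with the formula \eqref{GPdef:int}'', and your substitutions $\kappa=1$, respectively $\beta=\rho=\kappa=1$ with $\omega=\tfrac{-\alpha}{1-\alpha}$, are precisely that comparison made explicit.

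For \eqref{GP:IAB} you are also right that $\beta=0$ forces one to read the right-hand side through the series \eqref{GPseries:eqn} rather than the integral \eqref{GPdef:int}; this is implicitly what the paper's one-line proof intends, and your coefficient-matching strategy is the correct way to execute it. However, your caution about the reflection identity is more warranted than you realise: when you actually carry the matching through, you will \emph{not} recover the parameters printed in \eqref{GP:IAB}. Expanding the stated right-hand side via \eqref{GPseries:eqn} gives the $n$th coefficient
\[
\left(\frac{1-\alpha}{B(\alpha)}\right)^{\rho}\frac{(\rho+1)_n}{n!}\left(\frac{\alpha}{1-\alpha}\right)^{n}
=\frac{(1-\alpha)^{\rho-n}\alpha^n}{B(\alpha)^{\rho}}\cdot\frac{(\rho+1)_n}{n!},
\]
whereas \eqref{IABdef} demands the factor $\binom{\rho}{n}=(-1)^{n}(-\rho)_n/n!$, which is not $(\rho+1)_n/n!$. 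Your sanity check at $\rho=1$, $n=1$ already fails (one gets coefficient $2\alpha/B(\alpha)$ in front of $\prescript{RL}{}I_{c+}^{\alpha}f$ instead of $\alpha/B(\alpha)$, and the series does not even terminate). The parameters that make the coefficients match are $\omega=\tfrac{-\alpha}{1-\alpha}$ and Prabhakar index $-\rho$, i.e.
\[
\prescript{IAB}{}I_{c+}^{\alpha,\rho}f(x)=\left(\frac{1-\alpha}{B(\alpha)}\right)^{\rho}\mathcal{E}_{\alpha,0;c+}^{\frac{-\alpha}{1-\alpha},\,-\rho,\,1}f(x).
\]
So your method is sound and is the paper's method; it is the printed form of \eqref{GP:IAB} that appears to contain a misprint, which your careful execution would detect.
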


\begin{proof}
This follows directly from comparing the definitions \eqref{PRABdef:int}, %\eqref{CFdef:deriv}, 
\eqref{ABRdef:deriv}, \eqref{ABCdef:deriv}, \eqref{IABdef} with the formula \eqref{GPdef:int} for the generalised Prabhakar integral.
\end{proof}

Using Proposition \ref{GPcases}, it is straightforward to deduce the following corollaries of Theorem \ref{GPseries} for the other models of fractional calculus which can be seen as special cases of generalised Prabhakar.

\begin{corollary}
\label{PRABseries}
Given complex parameters $\alpha,\beta,\omega,\rho$ satisfying $\mathrm{Re}(\alpha)>0,\mathrm{Re}(\beta)>0$, then for any interval $(c,d)\subset\mathbb{R}$ and any function $f\in L^1(c,d)$, the Prabhakar fractional integral can be written as
\begin{equation}
\label{PRABseries:int}
\mathcal{E}_{\alpha,\beta;c+}^{\omega,\rho}f(x)=\sum_{n=0}^{\infty}\frac{\Gamma(\rho+n)\omega^n}{\Gamma(\rho)n!}\prescript{RL}{}I_{c+}^{\alpha n+\beta}f(x),
\end{equation}
and the Prabhakar fractional derivative can be written as
\begin{equation}
\label{PRABseries:deriv}
\mathcal{D}_{\alpha,\beta;c+}^{\omega,\rho}f(x)=\sum_{n=0}^{\infty}\frac{\Gamma(-\rho+n)\omega^n}{\Gamma(-\rho)n!}\prescript{RL}{}I_{c+}^{\alpha n-\beta}f(x),
\end{equation}
where the series on the right-hand sides are locally uniformly convergent.
\end{corollary}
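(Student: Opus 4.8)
The plan is to deduce both formulae from Theorem \ref{GPseries} via the equivalences of Proposition \ref{GPcases}. For the integral formula \eqref{PRABseries:int}, I would first invoke \eqref{GP:PRAB} to write $\mathcal{E}_{\alpha,\beta;c+}^{\omega,\rho}f=\mathcal{E}_{\alpha,\beta;c+}^{\omega,\rho,1}f$, and then apply Theorem \ref{GPseries} with $\kappa=1$. The only thing to check here is that the hypotheses \eqref{GGML:conditions} are met: with $\kappa=1$ they collapse to $\mathrm{Re}(\kappa)=1>0$ and $\mathrm{Re}(\kappa-\alpha)=1-\mathrm{Re}(\alpha)<1$, both of which follow from the standing assumption $\mathrm{Re}(\alpha)>0$ (together with $\mathrm{Re}(\beta)>0$). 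Substituting $\kappa=1$ into \eqref{GPseries:eqn} turns $\Gamma(\rho+\kappa n)$ into $\Gamma(\rho+n)$ and produces \eqref{PRABseries:int} directly, with the local uniform convergence inherited from the theorem.

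For the derivative formula \eqref{PRABseries:deriv}, I would start from the definition \eqref{PRABdef:deriv}, namely $\mathcal{D}_{\alpha,\beta;c+}^{\omega,\rho}f=\frac{\mathrm{d}^m}{\mathrm{d}x^m}\mathcal{E}_{\alpha,m-\beta;c+}^{\omega,-\rho}f$ with $m=\lfloor\mathrm{Re}(\beta)\rfloor+1$. The key preliminary observation is that $\mathrm{Re}(m-\beta)=m-\mathrm{Re}(\beta)\in(0,1]$, so the integral formula \eqref{PRABseries:int} just established applies with $\beta$ replaced by $m-\beta$ and $\rho$ replaced by $-\rho$, giving $\mathcal{E}_{\alpha,m-\beta;c+}^{\omega,-\rho}f(x)=\sum_{n=0}^{\infty}\frac{\Gamma(-\rho+n)\omega^n}{\Gamma(-\rho)n!}\prescript{RL}{}I_{c+}^{\alpha n+m-\beta}f(x)$ (the ratio $\Gamma(-\rho+n)/\Gamma(-\rho)$ being read as the Pochhammer symbol $(-\rho)_n$ when $\rho$ is a positive integer). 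It then remains to differentiate $m$ times and move $\frac{\mathrm{d}^m}{\mathrm{d}x^m}$ inside the sum, justified by the local uniform convergence of the differentiated series, which I would verify from the decay of the coefficients together with standard bounds on the Riemann--Liouville integrals over a compact subinterval. This reduces matters to evaluating $\frac{\mathrm{d}^m}{\mathrm{d}x^m}\prescript{RL}{}I_{c+}^{\alpha n+m-\beta}f$ term by term.

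The crux is therefore the composition identity $\frac{\mathrm{d}^m}{\mathrm{d}x^m}\prescript{RL}{}I_{c+}^{\mu}f=\prescript{RL}{}I_{c+}^{\mu-m}f$ for $\mathrm{Re}(\mu)>0$, applied with $\mu=\alpha n+m-\beta$ so that $\mu-m=\alpha n-\beta$. When $\mathrm{Re}(\mu)>m$ this is immediate: the semigroup law \eqref{RLcomp:int} gives $\prescript{RL}{}I_{c+}^{\mu}=\prescript{RL}{}I_{c+}^{m}\prescript{RL}{}I_{c+}^{\mu-m}$ and the $m$-fold derivative cancels the factor $\prescript{RL}{}I_{c+}^{m}$. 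The delicate case is the finite set of small indices $n$ for which $\mathrm{Re}(\alpha n-\beta)\leq0$, where $\prescript{RL}{}I_{c+}^{\alpha n-\beta}$ must be read as the Riemann--Liouville derivative $\prescript{RL}{}D_{c+}^{\beta-\alpha n}$; here I would split $\frac{\mathrm{d}^m}{\mathrm{d}x^m}=\frac{\mathrm{d}^{n'}}{\mathrm{d}x^{n'}}\frac{\mathrm{d}^{m-n'}}{\mathrm{d}x^{m-n'}}$ with $n'=\lfloor\mathrm{Re}(m-\mu)\rfloor+1\in\{1,\dots,m\}$, apply the semigroup law to the inner $m-n'$ derivatives (legitimate since one checks $\mathrm{Re}(\mu)>m-n'$), and recognise the outer $n'$ derivatives as producing exactly $\prescript{RL}{}D_{c+}^{m-\mu}f$ by the definition \eqref{RLdef:deriv}. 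Reassembling the series then yields \eqref{PRABseries:deriv}. I expect this last point -- rigorously justifying the composition identity across the boundary where integrals turn into derivatives, while simultaneously controlling the convergence of the differentiated series -- to be the main obstacle; the integral formula itself is essentially a one-line specialisation of Theorem \ref{GPseries}.
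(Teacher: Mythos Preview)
Your proposal is correct and follows essentially the same route the paper takes: it states Corollary~\ref{PRABseries} as an immediate consequence of Theorem~\ref{GPseries} via the specialisation $\kappa=1$ from Proposition~\ref{GPcases}, and for the derivative formula implicitly relies on applying the integral series to $\mathcal{E}_{\alpha,m-\beta;c+}^{\omega,-\rho}$ and then differentiating $m$ times term by term (the paper does exactly this computation, without further justification, in its later proof of the left-inverse theorem). Your treatment is in fact more careful than the paper's, which does not spell out the verification of the hypotheses \eqref{GGML:conditions}, the legitimacy of interchanging $\tfrac{\mathrm{d}^m}{\mathrm{d}x^m}$ with the sum, or the composition identity across the integral/derivative boundary.
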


We note that if \eqref{PRABseries:int}-\eqref{PRABseries:deriv} are used as the definitions of Prabhakar fractional integrals and derivatives, then differintegrals in this model can be unified under a single series formula, where switching between derivatives to integrals means simply switching the sign of the parameters $\rho$ and $\beta$.

\begin{corollary}
\label{ABseries}
Given $\alpha\in(0,1)$, then for any interval $(c,d)\subset\mathbb{R}$ and any function $f\in L^1(c,d)$, the AB fractional derivatives of Riemann--Liouville and Caputo type can be written respectively as:
\begin{align}
\label{ABRseries:eqn}
\prescript{ABR}{}D_{c+}^{\alpha}f(x)&=\frac{B(\alpha)}{1-\alpha}\cdot\frac{\mathrm{d}}{\mathrm{d}x}\sum_{n=0}^{\infty}\left(\frac{-\alpha}{1-\alpha}\right)^n\prescript{RL}{}I_{c+}^{\alpha n+1}f(x), \\
\label{ABCseries:eqn}
\prescript{ABC}{}D_{c+}^{\alpha}f(x)&=\frac{B(\alpha)}{1-\alpha}\sum_{n=0}^{\infty}\left(\frac{-\alpha}{1-\alpha}\right)^n\prescript{RL}{}I_{c+}^{\alpha n+1}f'(x),
\end{align}
where the series on the right-hand sides are locally uniformly convergent.
\end{corollary}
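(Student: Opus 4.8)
The plan is to combine the two results already in hand: the special-case identities \eqref{GP:ABR} and \eqref{GP:ABC} from Proposition \ref{GPcases}, which exhibit the two AB derivatives as instances of the generalised Prabhakar operator, together with the master series formula \eqref{GPseries:eqn} from Theorem \ref{GPseries}. No fresh analytic work should be required; the corollary is meant to fall out as a direct substitution of parameters into a formula we have already proved.

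First I would read off from \eqref{GP:ABR} and \eqref{GP:ABC} that both AB derivatives are built from the single operator $\mathcal{E}_{\alpha,1;c+}^{-\alpha/(1-\alpha),1,1}$, applied to $f$ in the ABR case (and then followed by $\mathrm{d}/\mathrm{d}x$) and to $f'$ in the ABC case. It therefore suffices to expand this one operator by means of Theorem \ref{GPseries}. The relevant parameters are $\beta=1$, $\rho=1$, $\kappa=1$, and $\omega=-\alpha/(1-\alpha)$; before invoking the theorem I would verify that the hypotheses \eqref{GGML:conditions} hold for every $\alpha\in(0,1)$, which is immediate since $\mathrm{Re}(\alpha)=\alpha>0$, $\mathrm{Re}(\beta)=\mathrm{Re}(\kappa)=1>0$, and $\mathrm{Re}(\kappa-\alpha)=1-\alpha<1$.

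The one spot where a little care pays off is the collapse of the series coefficient. Substituting $\rho=\kappa=1$ into the general coefficient $\Gamma(\rho+\kappa n)\omega^n/\big(\Gamma(\rho)\,n!\big)$ gives $\Gamma(1+n)\omega^n/\big(\Gamma(1)\,n!\big)$, and since $\Gamma(1+n)=n!$ and $\Gamma(1)=1$ this reduces to the bare power $\omega^n=\big(-\alpha/(1-\alpha)\big)^n$. Theorem \ref{GPseries} then yields $\mathcal{E}_{\alpha,1;c+}^{-\alpha/(1-\alpha),1,1}g(x)=\sum_{n=0}^{\infty}\big(-\alpha/(1-\alpha)\big)^n\,\prescript{RL}{}I_{c+}^{\alpha n+1}g(x)$ for any $g\in L^1(c,d)$, with the series locally uniformly convergent.

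Finally I would assemble the two claimed identities. For \eqref{ABRseries:eqn} I take $g=f$, prepend the prefactor $B(\alpha)/(1-\alpha)$, and apply $\mathrm{d}/\mathrm{d}x$ to the whole locally uniformly convergent series exactly as it appears in \eqref{GP:ABR}, leaving the derivative outside the sum precisely as stated; for \eqref{ABCseries:eqn} I instead take $g=f'$ (which presumes $f$ differentiable, as the Caputo definition \eqref{ABCdef:deriv} already does) and prepend the same prefactor. The local uniform convergence of both right-hand series is inherited verbatim from Theorem \ref{GPseries}. The main obstacle here is bookkeeping rather than analysis: the only thing that could genuinely go wrong is mismatching the parameter dictionary between \eqref{GP:ABR}--\eqref{GP:ABC} and \eqref{GPseries:eqn}, so I would double-check the exponent $\alpha n+1$ and the sign inside $\omega$ against the original definitions \eqref{ABRdef:deriv} and \eqref{ABCdef:deriv}.
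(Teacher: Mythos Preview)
Your proposal is correct and follows exactly the approach the paper intends: the corollary is deduced by substituting the parameter identifications \eqref{GP:ABR}--\eqref{GP:ABC} from Proposition~\ref{GPcases} into the master series \eqref{GPseries:eqn} of Theorem~\ref{GPseries}, with the coefficient collapsing via $\Gamma(1+n)/(\Gamma(1)\,n!)=1$. The paper gives no further detail than this, so your write-up is in fact more thorough than the original.
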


\begin{remark}
We note that Corollary \ref{ABseries} was already proved in \cite{baleanu-fernandez}, and was in fact what inspired this work; we mention it here only to demonstrate that the new result is more general.

For the iterated AB differintegral, Theorem \ref{GPseries} gives us no new result, since substituting \eqref{GP:IAB} into \eqref{GPseries:eqn} would yield the same series which was used in \cite{fernandez-baleanu} to define the operator in the first place. However, the idea of iterating fractional differintegral models to fractional powers can still be used in the Prabhakar model; see section 5 below.
\end{remark}

Now that we have established all these series formulae for differintegrals in assorted fractional calculus models, it is time to ask what such formulae can be used for.

First of all, they make much easier the proofs of many fundamental results about Prabhakar operators and their relationships with classical fractional-calculus operators. For example, the following result comprises Theorems 3 and 4 in \cite{prabhakar}, Theorems 6 and 7 in \cite{kilbas-saigo-saxena}, and Theorems 4 and 5 in \cite{srivastava-tomovski}. Each of these identities was proved originally using Fubini's theorem, but now follows much more quickly from our new series formula.

\begin{theorem}
\label{GPwithRL}
The generalised Prabhakar operator \eqref{GPdef:int}, with $\alpha,\beta,\omega,\rho,\kappa\in\mathbb{C}$ satisfying \eqref{GGML:conditions}, interacts naturally with Riemann--Liouville differintegral operators in the following ways.

For any $L^1$ function $f$, and any $\mu\in\mathbb{C}$ such that $\mathrm{Re}(\mu)>-\mathrm{Re}(\beta)$, we have:
\begin{equation}
\label{GPwithRL:eqn1}
\prescript{RL}{}I^{\mu}_{c+}\mathcal{E}_{\alpha,\beta;c+}^{\omega,\rho,\kappa}f=\mathcal{E}_{\alpha,\beta+\mu;c+}^{\omega,\rho,\kappa}f.
\end{equation}
If in addition $\mathrm{Re}(\mu)>0$, then
%For any $L^1$ function $f$, and any $\mu\in\mathbb{C}$ such that the Riemann--Liouville differintegral $\prescript{RL}{}I^{\mu}_{c+}f$ is also an $L^1$ function, we have:
\begin{equation}
\label{GPwithRL:eqn2}
\prescript{RL}{}I^{\mu}_{c+}\mathcal{E}_{\alpha,\beta;c+}^{\omega,\rho,\kappa}f=\mathcal{E}_{\alpha,\beta;c+}^{\omega,\rho,\kappa}\prescript{RL}{}I^{\mu}_{c+}f.
\end{equation}
\end{theorem}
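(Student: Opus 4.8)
The plan is to feed the series representation of Theorem~\ref{GPseries} into both identities, so that each collapses to a term-by-term statement about Riemann--Liouville integrals that is settled by the composition law of Lemma~\ref{RLcomp}; this replaces the original Fubini-based arguments with a single appeal to \eqref{RLcomp:int}. Throughout I read the outer operator $\prescript{RL}{}I^{\mu}_{c+}$ in the unified differintegral sense, so that it is a genuine fractional integral when $\mathrm{Re}(\mu)>0$ and the derivative $\prescript{RL}{}D^{-\mu}_{c+}$ when $\mathrm{Re}(\mu)\leq0$.

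For \eqref{GPwithRL:eqn1} I would first rewrite $\mathcal{E}_{\alpha,\beta;c+}^{\omega,\rho,\kappa}f$ by \eqref{GPseries:eqn}, then move $\prescript{RL}{}I^{\mu}_{c+}$ inside the sum (the delicate point, deferred to the last paragraph), obtaining
\[
\prescript{RL}{}I^{\mu}_{c+}\mathcal{E}_{\alpha,\beta;c+}^{\omega,\rho,\kappa}f=\sum_{n=0}^{\infty}\frac{\Gamma(\rho+\kappa n)\omega^n}{\Gamma(\rho)\,n!}\,\prescript{RL}{}I^{\mu}_{c+}\prescript{RL}{}I_{c+}^{\alpha n+\beta}f.
\]
Each inner order $\alpha n+\beta$ has positive real part, since $\mathrm{Re}(\alpha),\mathrm{Re}(\beta)>0$ under \eqref{GGML:conditions}; hence \eqref{RLcomp:int}, applied with outer index $\mu$ and inner index $\alpha n+\beta$, gives $\prescript{RL}{}I^{\mu}_{c+}\prescript{RL}{}I_{c+}^{\alpha n+\beta}f=\prescript{RL}{}I_{c+}^{\alpha n+\beta+\mu}f$ for every admissible $\mu$, with no sign restriction on $\mathrm{Re}(\mu)$ (the restriction in the lemma is on the \emph{inner} order only). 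The resulting series is exactly \eqref{GPseries:eqn} with $\beta$ replaced by $\beta+\mu$, namely $\mathcal{E}_{\alpha,\beta+\mu;c+}^{\omega,\rho,\kappa}f$, and the hypothesis $\mathrm{Re}(\mu)>-\mathrm{Re}(\beta)$ is precisely the condition $\mathrm{Re}(\beta+\mu)>0$ that makes this operator well-defined and its series convergent.

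For \eqref{GPwithRL:eqn2} I would start from the same expansion and the same per-term value $\prescript{RL}{}I_{c+}^{\alpha n+\beta+\mu}f$, but now factor it the other way: invoking \eqref{RLcomp:int} with inner index $\mu$ gives $\prescript{RL}{}I_{c+}^{\alpha n+\beta+\mu}f=\prescript{RL}{}I_{c+}^{\alpha n+\beta}\prescript{RL}{}I^{\mu}_{c+}f$, which forces $\mathrm{Re}(\mu)>0$ so that the inner operator is itself a bona fide fractional integral. Re-summing then produces $\sum_{n}\tfrac{\Gamma(\rho+\kappa n)\omega^n}{\Gamma(\rho)\,n!}\,\prescript{RL}{}I_{c+}^{\alpha n+\beta}\bigl(\prescript{RL}{}I^{\mu}_{c+}f\bigr)=\mathcal{E}_{\alpha,\beta;c+}^{\omega,\rho,\kappa}\prescript{RL}{}I^{\mu}_{c+}f$. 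This makes transparent why the second identity needs the stronger hypothesis: the extra order must now sit on the inside of a composition, where Lemma~\ref{RLcomp} demands a positive real part.

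The main obstacle is justifying the term-by-term interchange of $\prescript{RL}{}I^{\mu}_{c+}$ with the infinite sum. When $\mathrm{Re}(\mu)>0$ this is routine, since $\prescript{RL}{}I^{\mu}_{c+}$ is a bounded linear operator on $L^1(c,d)$ and therefore commutes with the locally uniformly convergent series of Theorem~\ref{GPseries}. The genuinely delicate range is $-\mathrm{Re}(\beta)<\mathrm{Re}(\mu)\leq0$ in \eqref{GPwithRL:eqn1}, where $\prescript{RL}{}I^{\mu}_{c+}=\prescript{RL}{}D^{-\mu}_{c+}$ is a derivative and need not be continuous on $L^1$. Here I would split it, following \eqref{RLdef:deriv}, as $\prescript{RL}{}D^{-\mu}_{c+}=\frac{\mathrm{d}^m}{\mathrm{d}x^m}\circ\prescript{RL}{}I_{c+}^{m+\mu}$ with $m\coloneqq\lfloor-\mathrm{Re}(\mu)\rfloor+1$, noting $\mathrm{Re}(m+\mu)>0$; the bounded integral $\prescript{RL}{}I_{c+}^{m+\mu}$ passes through the sum as above, and the integer-order derivative $\frac{\mathrm{d}^m}{\mathrm{d}x^m}$ may then be interchanged with the remaining series once the term-by-term differentiated series is shown to converge locally uniformly. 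That differentiated series is nothing but the series for $\mathcal{E}_{\alpha,\beta+\mu;c+}^{\omega,\rho,\kappa}f$, whose local uniform convergence is guaranteed by Theorem~\ref{GPseries} as soon as $\mathrm{Re}(\beta+\mu)>0$, which closes the argument in all admissible cases.
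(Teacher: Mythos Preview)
Your proposal is correct and follows essentially the same route as the paper: apply the series formula \eqref{GPseries:eqn} and reduce both identities to the term-by-term Riemann--Liouville composition statements $\prescript{RL}{}I^{\mu}_{c+}\prescript{RL}{}I^{\alpha n+\beta}_{c+}f=\prescript{RL}{}I^{\alpha n+\beta+\mu}_{c+}f$ and $\prescript{RL}{}I^{\alpha n+\beta}_{c+}\prescript{RL}{}I^{\mu}_{c+}f=\prescript{RL}{}I^{\alpha n+\beta+\mu}_{c+}f$. Your final paragraph in fact goes beyond the paper, which simply asserts that the series reduction works, whereas you supply the justification for passing $\prescript{RL}{}I^{\mu}_{c+}$ (in particular the derivative case $\mathrm{Re}(\mu)\le 0$) through the infinite sum.
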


\begin{proof}
For the first identity, by the series formula \eqref{GPseries:eqn} it is enough to show that \[\prescript{RL}{}I^{\mu}_{c+}\prescript{RL}{}I^{\alpha n+\beta}_{c+}f=\prescript{RL}{}I^{\alpha n+\beta+\mu}_{c+}f,\quad n\geq0,\] which is clearly true by basic properties of Riemann--Liouville differintegrals. For the second identity, by \eqref{GPseries:eqn} it is enough to show that \[\prescript{RL}{}I^{\mu}_{c+}\prescript{RL}{}I^{\alpha n+\beta}_{c+}f=\prescript{RL}{}I^{\alpha n+\beta}_{c+}\prescript{RL}{}I^{\mu}_{c+}f,\quad n\geq0,\] which again is clearly true, by the semigroup property for Riemann--Liouville fractional integrals.
\end{proof}

\begin{remark}
It is important to note that \eqref{GPwithRL:eqn2} is \textit{not} always valid when $\mathrm{Re}(\mu)<0$. It will be valid under certain initial value conditions on $f$, but in general the left and right hand sides differ by a series of initial value terms, just as in the Riemann--Liouville case -- see Lemma \ref{RLcomp}.
\end{remark}

The following result comprises Theorem 5 in \cite{prabhakar} and Theorem 8 in \cite{kilbas-saigo-saxena}, both of which were originally proved using Fubini's theorem. It can now be proved in a more elementary way using our Theorem \ref{GPseries}.

\begin{theorem}
\label{PRABsemigp}
The Prabhakar operator \eqref{PRABdef:int} satisfies the following semigroup property, under the usual assumptions $\mathrm{Re}(\alpha)>0,\mathrm{Re}(\beta_1)>0,\mathrm{Re}(\beta_2)>0$ and operating on an $L^1$ function $f$.
\begin{equation}
\label{PRABsemigp:eqn}
\mathcal{E}_{\alpha,\beta_1;c+}^{\omega,\rho_1}\mathcal{E}_{\alpha,\beta_2;c+}^{\omega,\rho_2}f=\mathcal{E}_{\alpha,\beta_1+\beta_2;c+}^{\omega,\rho_1+\rho_2}f.
\end{equation}
\end{theorem}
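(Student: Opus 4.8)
The plan is to bypass Fubini entirely and instead reduce the composition of two Prabhakar operators to a double series of Riemann--Liouville integrals, using the series formula of Corollary~\ref{PRABseries}. First I would expand both factors via \eqref{PRABseries:int}. Writing $g=\mathcal{E}_{\alpha,\beta_2;c+}^{\omega,\rho_2}f$ as a single series and then applying the outer operator termwise, I would obtain
\begin{equation*}
\mathcal{E}_{\alpha,\beta_1;c+}^{\omega,\rho_1}\mathcal{E}_{\alpha,\beta_2;c+}^{\omega,\rho_2}f=\sum_{m=0}^{\infty}\sum_{n=0}^{\infty}\frac{\Gamma(\rho_1+m)\,\Gamma(\rho_2+n)\,\omega^{m+n}}{\Gamma(\rho_1)\,\Gamma(\rho_2)\,m!\,n!}\,\prescript{RL}{}I_{c+}^{\alpha m+\beta_1}\prescript{RL}{}I_{c+}^{\alpha n+\beta_2}f.
\end{equation*}
Passing the operator $\prescript{RL}{}I_{c+}^{\alpha m+\beta_1}$ inside the inner series is legitimate because that operator is bounded on $L^1(c,d)$ and the inner series converges in $L^1$ (indeed locally uniformly, by Corollary~\ref{PRABseries}). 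Since $\mathrm{Re}(\alpha n+\beta_2)>0$ for every $n\geq0$, the Riemann--Liouville semigroup property \eqref{RLcomp:int} applies term by term and collapses each inner composition to $\prescript{RL}{}I_{c+}^{\alpha(m+n)+\beta_1+\beta_2}f$.

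Next I would reorganise the double sum by the total index $N=m+n$, grouping all pairs with $m+n=N$, to get
\begin{equation*}
\mathcal{E}_{\alpha,\beta_1;c+}^{\omega,\rho_1}\mathcal{E}_{\alpha,\beta_2;c+}^{\omega,\rho_2}f=\sum_{N=0}^{\infty}\omega^{N}\left(\sum_{m=0}^{N}\frac{\Gamma(\rho_1+m)}{\Gamma(\rho_1)\,m!}\cdot\frac{\Gamma(\rho_2+N-m)}{\Gamma(\rho_2)\,(N-m)!}\right)\prescript{RL}{}I_{c+}^{\alpha N+\beta_1+\beta_2}f.
\end{equation*}
Comparing this with the series \eqref{PRABseries:int} for the operator $\mathcal{E}_{\alpha,\beta_1+\beta_2;c+}^{\omega,\rho_1+\rho_2}$, the claimed identity becomes equivalent to showing that the bracketed inner sum equals $\frac{\Gamma(\rho_1+\rho_2+N)}{\Gamma(\rho_1+\rho_2)\,N!}$ for every $N\geq0$. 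Rewriting the ratios as Pochhammer symbols via $\frac{\Gamma(\rho+m)}{\Gamma(\rho)\,m!}=\frac{(\rho)_m}{m!}$, this is precisely the Vandermonde convolution identity
\begin{equation*}
\sum_{m=0}^{N}\frac{(\rho_1)_m}{m!}\cdot\frac{(\rho_2)_{N-m}}{(N-m)!}=\frac{(\rho_1+\rho_2)_N}{N!},
\end{equation*}
which is nothing but the Cauchy product of the binomial expansions $(1-z)^{-\rho_1}(1-z)^{-\rho_2}=(1-z)^{-(\rho_1+\rho_2)}$.

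The only genuinely delicate point is the reindexing from the $(m,n)$-grid to the diagonal index $N$. I would justify this by establishing absolute convergence of the double series of $L^1$-norms on each bounded subinterval: the Pochhammer coefficients satisfy the same ratio-test bounds that already guarantee local uniform convergence of each single series in Corollary~\ref{PRABseries}, and the Riemann--Liouville norms $\|\prescript{RL}{}I_{c+}^{\alpha N+\beta_1+\beta_2}f\|_{L^1}$ grow only factorially slowly, so the iterated sum of absolute values is finite. Once absolute convergence is secured, the rearrangement theorem for double series permits the regrouping, and the Vandermonde identity then completes the proof. Every other step is a routine termwise application of results already available in the excerpt, so I expect this convergence bookkeeping to be the main (and only) obstacle.
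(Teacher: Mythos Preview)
Your proposal is correct and follows essentially the same route as the paper: expand both Prabhakar operators via the series formula \eqref{PRABseries:int}, collapse the Riemann--Liouville compositions using \eqref{RLcomp:int}, regroup diagonally by $N=m+n$, and reduce to a combinatorial identity on the Pochhammer coefficients. The only difference is cosmetic: the paper rewrites that identity in terms of the beta function and verifies it by induction on $k$ via the recursion $B(x,y)=\tfrac{x-1}{x+y-1}B(x-1,y)+\tfrac{y-1}{x+y-1}B(x,y-1)$, whereas you recognise it directly as the Vandermonde convolution and dispatch it by the Cauchy product of $(1-z)^{-\rho_1}(1-z)^{-\rho_2}$; your treatment of the convergence bookkeeping is also more explicit than the paper's.
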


\begin{proof}
Using the series formula \eqref{PRABseries:int}, the left-hand side of \eqref{PRABsemigp:eqn} becomes
\begin{align*}
\mathcal{E}_{\alpha,\beta_1;c+}^{\omega,\rho_1}\mathcal{E}_{\alpha,\beta_2;c+}^{\omega,\rho_2}f(x)&=\sum_{n=0}^{\infty}\frac{\Gamma(\rho_1+n)\omega^n}{\Gamma(\rho_1)n!}\prescript{RL}{}I_{c+}^{\alpha n+\beta_1}\left(\sum_{m=0}^{\infty}\frac{\Gamma(\rho_2+m)\omega^m}{\Gamma(\rho_2)m!}\prescript{RL}{}I_{c+}^{\alpha m+\beta_2}f(x)\right) \\
&=\sum_{m,n}\frac{\Gamma(\rho_1+n)\Gamma(\rho_2+m)\omega^{m+n}}{\Gamma(\rho_1)\Gamma(\rho_2)n!m!}\prescript{RL}{}I_{c+}^{\alpha(m+n)+\beta_1+\beta_2}f(x) \\
&=\sum_{k=0}^{\infty}\left[\sum_{m+n=k}\frac{B(\rho_1+n,\rho_2+m)(m+n)!}{B(\rho_1,\rho_2)n!m!}\right]\frac{\Gamma(\rho_1+\rho_2+k)\omega^k}{\Gamma(\rho_1+\rho_2)k!}\prescript{RL}{}I_{c+}^{\alpha k+\beta_1+\beta_2}f(x),
\end{align*}
where $B$ is the beta function. Similarly, the right-hand side becomes
\begin{equation*}
\mathcal{E}_{\alpha,\beta_1+\beta_2;c+}^{\omega,\rho_1+\rho_2}f(x)=\sum_{k=0}^{\infty}\frac{\Gamma(\rho_1+\rho_2+k)\omega^k}{\Gamma(\rho_1+\rho_2)k!}\prescript{RL}{}I_{c+}^{\alpha k+\beta_1+\beta_2}f(x).
\end{equation*}
So it suffices to prove that
\begin{equation*}
\sum_{m+n=k}\frac{B(\rho_1+n,\rho_2+m)(m+n)!}{B(\rho_1,\rho_2)n!m!}=1,
\end{equation*}
which can be verified by induction on $k$, using the fact that \[B(x,y)=\frac{x-1}{x+y-1}B(x-1,y)+\frac{y-1}{x+y-1}B(x,y-1).\]
\end{proof}

The following result comprises Theorem 9 in \cite{kilbas-saigo-saxena} and our equation \eqref{PRAB:inverse}, and it is also stressed in \cite{garra-gorenflo-polito-tomovski}. It is used to justify the definition \eqref{PRABdef:deriv} of Prabhakar fractional derivatives.

\begin{theorem}
Under the usual assumptions $\mathrm{Re}(\alpha)>0,\mathrm{Re}(\beta)>0$, the Prabhakar fractional integral defined by \eqref{PRABdef:int} has a left inverse on the space of $L^1$ functions. This left inverse can be defined by the expression \eqref{PRAB:inverse}, which is independent of the value of $\gamma$ and therefore equivalent to the Prabhakar fractional derivative \eqref{PRABdef:deriv}.
\end{theorem}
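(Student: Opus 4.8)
The plan is to treat the free parameter $\gamma$ appearing in \eqref{PRAB:inverse} in three stages: first to verify that, for each admissible $\gamma$, the operator $\prescript{RL}{}D_{c+}^{\beta+\gamma}\mathcal{E}_{\alpha,\gamma;c+}^{\omega,-\rho}$ really is a left inverse of the Prabhakar integral; second to show that this composite operator does not depend on the choice of $\gamma$; and third to recognise the Prabhakar derivative \eqref{PRABdef:deriv} as the particular value $\gamma=m-\beta$. Existence of a left inverse on $L^1$ then follows by construction, and the last two stages together yield the asserted equivalence. Note that $\mathcal{E}_{\alpha,\gamma;c+}^{\omega,-\rho}$ is bounded on $L^1$ (with $\mathrm{Re}(\gamma)>0$), so the candidate operator is well defined on its image.

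For the left-inverse property, I would compose the candidate operator with $\mathcal{E}_{\alpha,\beta;c+}^{\omega,\rho}$ and simplify the two inner Prabhakar integrals by the semigroup property of Theorem \ref{PRABsemigp}, taken with $(\rho_1,\rho_2)=(-\rho,\rho)$ and $(\beta_1,\beta_2)=(\gamma,\beta)$; this is legitimate since $\mathrm{Re}(\gamma)>0$ and $\mathrm{Re}(\beta)>0$, and it yields $\mathcal{E}_{\alpha,\beta+\gamma;c+}^{\omega,0}f$. Because this composition produces the degenerate upper index $\rho_1+\rho_2=0$, I would confirm the reduction directly from the series \eqref{PRABseries:int}: since $(\rho)_n\big|_{\rho=0}=\delta_{n0}$, only the $n=0$ term survives and $\mathcal{E}_{\alpha,\beta+\gamma;c+}^{\omega,0}f=\prescript{RL}{}I_{c+}^{\beta+\gamma}f$; the same conclusion follows from the double series for the composition via the Vandermonde convolution $\sum_{n=0}^{k}\binom{k}{n}(-\rho)_n(\rho)_{k-n}=(0)_k$. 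It then remains to apply $\prescript{RL}{}D_{c+}^{\beta+\gamma}$ and invoke the elementary left-inverse property $\prescript{RL}{}D_{c+}^{s}\prescript{RL}{}I_{c+}^{s}f=f$ of the Riemann--Liouville operators, valid for $\mathrm{Re}(s)>0$ and $f\in L^1$.

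For independence of $\gamma$, I would compare two admissible values $\gamma_1$ and $\gamma_2=\gamma_1+\delta$ with $\mathrm{Re}(\delta)>0$. The identity \eqref{GPwithRL:eqn1} of Theorem \ref{GPwithRL}, specialised to $\kappa=1$, gives $\mathcal{E}_{\alpha,\gamma_2;c+}^{\omega,-\rho}f=\prescript{RL}{}I_{c+}^{\delta}\mathcal{E}_{\alpha,\gamma_1;c+}^{\omega,-\rho}f$, so it suffices to establish the purely Riemann--Liouville operator identity $\prescript{RL}{}D_{c+}^{\sigma+\delta}\prescript{RL}{}I_{c+}^{\delta}=\prescript{RL}{}D_{c+}^{\sigma}$ for $\mathrm{Re}(\sigma)>0$ and $\mathrm{Re}(\delta)>0$, applied with $\sigma=\beta+\gamma_1$. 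This last identity I would prove by unfolding both sides through the definition \eqref{RLdef:deriv}: writing each Riemann--Liouville derivative as $\tfrac{\mathrm{d}^p}{\mathrm{d}x^p}\prescript{RL}{}I_{c+}^{p-\sigma}$ for a suitable integer $p$, the semigroup law \eqref{RLcomp:int} for fractional integrals together with the elementary fact $\tfrac{\mathrm{d}^k}{\mathrm{d}x^k}\prescript{RL}{}I_{c+}^{k}=\mathrm{Id}$ reduces both sides to the same expression.

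Finally, to identify \eqref{PRABdef:deriv}, I would take $\gamma=m-\beta$ with $m=\lfloor\mathrm{Re}(\beta)\rfloor+1$, which is admissible because $\mathrm{Re}(m-\beta)>0$; then $\beta+\gamma=m$, so $\prescript{RL}{}D_{c+}^{\beta+\gamma}=\prescript{RL}{}D_{c+}^{m}=\tfrac{\mathrm{d}^m}{\mathrm{d}x^m}$ and \eqref{PRAB:inverse} becomes precisely \eqref{PRABdef:deriv}. Combined with the independence result, this shows that the Prabhakar derivative coincides with the left inverse for every admissible $\gamma$. The main obstacle I anticipate is the careful handling of the Riemann--Liouville composition identities for complex, non-integer orders in the independence step: the semigroup law for fractional integration is unconditional (Lemma \ref{RLcomp}), but combining it with integer-order differentiation requires keeping track of the floor functions defining the orders $p$, and it is here, rather than in the algebra of Prabhakar operators, that the argument must be made fully precise.
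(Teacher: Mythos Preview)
Your proposal is correct, and the first stage (left-inverse property) matches the paper's argument exactly: apply Theorem~\ref{PRABsemigp} to collapse the two Prabhakar integrals to $\mathcal{E}_{\alpha,\beta+\gamma;c+}^{\omega,0}$, use the series \eqref{PRABseries:int} to identify this with $\prescript{RL}{}I_{c+}^{\beta+\gamma}$, and then cancel with $\prescript{RL}{}D_{c+}^{\beta+\gamma}$. The genuine difference is in the independence-of-$\gamma$ step. The paper does not compare two values of $\gamma$; instead it substitutes the series \eqref{PRABseries:int} directly into $\prescript{RL}{}D_{c+}^{\beta+\gamma}\mathcal{E}_{\alpha,\gamma;c+}^{\omega,-\rho}f$ and applies the Riemann--Liouville semigroup termwise, so that the $\gamma$'s cancel and one is left with the explicit, $\gamma$-free series \eqref{PRABseries:deriv}. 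This is shorter and gives more: it simultaneously proves independence and identifies the operator with the Prabhakar derivative, so the paper never needs your third stage (the special choice $\gamma=m-\beta$). Your pairwise-comparison route via Theorem~\ref{GPwithRL} and the identity $\prescript{RL}{}D_{c+}^{\sigma+\delta}\prescript{RL}{}I_{c+}^{\delta}=\prescript{RL}{}D_{c+}^{\sigma}$ is perfectly valid, but note that as written it only covers pairs with $\mathrm{Re}(\gamma_2)>\mathrm{Re}(\gamma_1)$; you should remark that the case $\mathrm{Re}(\gamma_1)=\mathrm{Re}(\gamma_2)$ follows by comparing both to a third $\gamma$ with strictly larger real part. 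The concern you flag at the end about floor functions in the RL composition is real but routine; the paper sidesteps it entirely by working with the series.
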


\begin{proof}
Using first the result of Theorem \ref{PRABsemigp} and then the series formula \eqref{PRABseries:int}, we find that
\begin{align*}
\prescript{RL}{}D_{c+}^{\beta+\gamma}\mathcal{E}_{\alpha,\gamma;c+}^{\omega,-\rho}\mathcal{E}_{\alpha,\beta;c+}^{\omega,\rho}f(x)&=\prescript{RL}{}D_{c+}^{\beta+\gamma}\mathcal{E}_{\alpha,\beta+\gamma;c+}^{\omega,0}f(x) \\
&=\prescript{RL}{}D_{c+}^{\beta+\gamma}\left(\sum_{n=0}^{\infty}\frac{\Gamma(0+n)\omega^n}{\Gamma(0)n!}\prescript{RL}{}I_{c+}^{\alpha n+\beta+\gamma}f(x)\right) \\
&=\prescript{RL}{}D_{c+}^{\beta+\gamma}\left(\prescript{RL}{}I_{c+}^{\beta+\gamma}f(x)\right) \\
&=f(x),
\end{align*}
where in the final line we used \eqref{RLcomp:int} from Lemma \ref{RLcomp}. Thus \eqref{PRAB:inverse} provides a left inverse to the Prabhakar fractional integral, for any value of $\gamma$. To check that this expression is independent of $\gamma$, we use the series formula \eqref{PRABseries:int} again to get:
\begin{align*}
\prescript{RL}{}D_{c+}^{\beta+\gamma}\mathcal{E}_{\alpha,\gamma;c+}^{\omega,-\rho}f(x)&=\prescript{RL}{}D_{c+}^{\beta+\gamma}\left(\sum_{n=0}^{\infty}\frac{\Gamma(-\rho+n)\omega^n}{\Gamma(-\rho)n!}\prescript{RL}{}I_{c+}^{\alpha n+\gamma}f(x)\right) \\
&=\sum_{n=0}^{\infty}\frac{\Gamma(-\rho+n)\omega^n}{\Gamma(-\rho)n!}\prescript{RL}{}I_{c+}^{\alpha n-\beta}f(x),
\end{align*}
which is independent of $\gamma$ and precisely equal to the series formula \eqref{PRABseries:deriv} for Prabhakar fractional derivatives.
\end{proof}

\section{Fractional extensions of the product rule}

The product rule, or Leibniz rule, is one of the most fundamental results about classical derivatives, and there exist many papers on generalising it to various fractional scenarios. In particular, the work of Osler \cite{osler} established the following version for Riemann--Liouville fractional differintegrals:
\begin{equation}
\label{product:RL}
\prescript{RL}{}D^{\alpha}_{c+}\big(f(x)g(x)\big)=\sum_{m=0}^{\infty}\binom{\alpha}{m}\prescript{RL}{}D^{\alpha-m}_{c+}f(x)\frac{\mathrm{d}^mg(x)}{\mathrm{d}x^m},\quad\alpha\in\mathbb{C},x\in U\backslash\{c\},
\end{equation}
where $f(x)$, $g(x)$, and $f(x)g(x)$ are all functions in the form $x^{\zeta}\xi(x)$ with $\mathrm{Re}(\zeta)>-1$ and $\xi$ holomorphic on a complex domain $U\ni c$. Using the series formula for AB fractional derivatives, it is possible \cite{baleanu-fernandez} to derive from \eqref{product:RL} a version of the fractional product rule which is valid in the AB model:
\begin{equation}
\label{product:AB}
\prescript{ABR}{}D^{\alpha}_{c+}\big(f(x)g(x)\big)=\sum_{m=0}^{\infty}\frac{\mathrm{d}^mg(x)}{\mathrm{d}x^m}\Bigg[\frac{B(\alpha)}{1-\alpha}\sum_{n=0}^{\infty}\Big(\frac{-\alpha}{1-\alpha}\Big)^n\binom{-n\alpha}{m}\prescript{RL}{}I^{\alpha n+m}_{c+}f(x)\Bigg],
\end{equation}
where $0<\alpha<1$ and $f$ and $g$ are as before.

Now, using the new series formula for generalised Prabhakar fractional differintegrals, we can similarly derive a version of the product rule in this general model of fractional calculus.

\begin{theorem}
\label{product:GP}
Let $f$ and $g$ be complex functions such that $f(x)$, $g(x)$, and $f(x)g(x)$ are all in the form $x^{\zeta}\xi(x)$ with $\mathrm{Re}(\zeta)>-1$ and $\xi$ holomorphic on a domain $U\subset\mathbb{C}$. Then for any complex parameters $\alpha,\beta,\omega,\rho,\kappa$ satisfying the conditions \eqref{GGML:conditions}, the generalised Prabhakar operator satisfies the following version of the product rule:
\begin{equation}
\label{product:GP:eqn}
\mathcal{E}_{\alpha,\beta;c+}^{\omega,\rho,\kappa}\big(f(x)g(x)\big)=\sum_{m=0}^{\infty}\frac{\mathrm{d}^mg(x)}{\mathrm{d}x^m}\left[\sum_{n=0}^{\infty}\frac{\Gamma(\rho+\kappa n)\Gamma(1-\beta-\alpha n)\omega^n}{\Gamma(\rho)\Gamma(1-\beta-\alpha n-m)m!n!}\prescript{RL}{}I_{c+}^{\alpha n+\beta+m}f(x)\right].
\end{equation}
\end{theorem}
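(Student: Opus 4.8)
The plan is to combine the series formula of Theorem~\ref{GPseries} with Osler's product rule \eqref{product:RL} for classical Riemann--Liouville differintegrals. First I would apply \eqref{GPseries:eqn} to the product $fg$, writing
\[
\mathcal{E}_{\alpha,\beta;c+}^{\omega,\rho,\kappa}\big(f(x)g(x)\big)=\sum_{n=0}^{\infty}\frac{\Gamma(\rho+\kappa n)\omega^n}{\Gamma(\rho)n!}\,\prescript{RL}{}I_{c+}^{\alpha n+\beta}\big(f(x)g(x)\big).
\]
This reduces the problem to expanding each Riemann--Liouville integral of the product $fg$, so that the Mittag-Leffler structure is entirely absorbed into the outer coefficients.

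Next I would apply Osler's rule to each term. Since a Riemann--Liouville integral of order $\sigma$ is the same as a differintegral of order $-\sigma$, formula \eqref{product:RL} with $\alpha$ replaced by $-(\alpha n+\beta)$ gives
\[
\prescript{RL}{}I_{c+}^{\alpha n+\beta}\big(f(x)g(x)\big)=\sum_{m=0}^{\infty}\binom{-\alpha n-\beta}{m}\prescript{RL}{}I_{c+}^{\alpha n+\beta+m}f(x)\,\frac{\mathrm{d}^m g(x)}{\mathrm{d}x^m}.
\]
The hypotheses placed on $f$, $g$, and $fg$ (each of the form $x^{\zeta}\xi(x)$ with $\xi$ holomorphic) are exactly those required by \eqref{product:RL}, so this expansion is legitimate for every $n\geq0$. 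Substituting it into the outer sum produces a double series in $m$ and $n$; I would then interchange the two summations to collect the terms by the derivative $g^{(m)}$, and finally rewrite the generalised binomial coefficient via
\[
\binom{-\alpha n-\beta}{m}=\frac{\Gamma(1-\beta-\alpha n)}{m!\,\Gamma(1-\beta-\alpha n-m)},
\]
which immediately yields the claimed formula \eqref{product:GP:eqn} after the elementary cancellation of factors.

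The main obstacle is justifying the interchange of the $m$- and $n$-summations. I would handle this by establishing absolute, locally uniform convergence of the resulting double series: the $n$-summation carries the same coefficients $\Gamma(\rho+\kappa n)\omega^n/(\Gamma(\rho)n!)$ whose convergence under the conditions \eqref{GGML:conditions} is already guaranteed by Theorem~\ref{GPseries}, while for each fixed $x$ the Osler expansion in $m$ converges because $f$ and $g$ are of the stipulated holomorphic type. Combining these two estimates through a Fubini-type argument for absolutely convergent double series licenses the rearrangement; once that is in place, the remaining steps are purely algebraic and require no further analytic input.
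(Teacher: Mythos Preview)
Your formal strategy---apply the series expansion \eqref{GPseries:eqn}, then Osler's rule \eqref{product:RL} with exponent $-\alpha n-\beta$, then swap the sums and rewrite the binomial coefficient---is exactly the paper's. The divergence is in how the interchange of the $m$- and $n$-sums is justified, and here your sketch has a genuine gap. You propose to invoke Fubini after establishing absolute convergence of the double series, but the two facts you cite---convergence of $\sum_n \Gamma(\rho+\kappa n)\omega^n/(\Gamma(\rho)n!)$ and convergence of the Osler $m$-expansion for each fixed $n$---are statements about the two \emph{iterated} series separately, not about the double series of absolute values. Separate convergence of rows and columns does not by itself license rearrangement; you would need a uniform-in-$n$ bound on the tails of the $m$-series (or vice versa), and the Osler expansion does not come with such a bound for free, since the binomial coefficients $\binom{-\alpha n-\beta}{m}$ depend on $n$ in a way that could in principle spoil any na\"ive product estimate.

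The paper sidesteps this difficulty altogether. Instead of the infinite Osler expansion, it uses the \emph{truncated} Leibniz formula \eqref{product:remainder} with an explicit remainder $R_N^{-\alpha n-\beta}(x)$. Because the $m$-sum is now finite (of length $N+1$), swapping it with the locally uniformly convergent $n$-sum is immediate. The work then shifts to showing that $\sum_{n\geq0}\frac{\Gamma(\rho+\kappa n)\omega^n}{\Gamma(\rho)n!}R_N^{-\alpha n-\beta}(x)\to 0$ as $N\to\infty$; this is done by observing (via Podlubny's integral representation of $R_N$) that the $n$-dependence of the remainder factors out as $(x-c)^{\alpha n}/\Gamma(\alpha n+\beta)$, so the whole $n$-sum collapses to a single Mittag-Leffler factor $E_{\alpha,\beta}^{\rho,\kappa}\big(\omega(x-c)^{\alpha}\big)$ independent of $N$, after which the classical $R_N\to 0$ argument applies verbatim. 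If you want to salvage the direct Fubini route, you would need to supply a comparable uniform estimate; otherwise the truncation argument is the cleaner path.
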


\begin{proof}
Formally, we can argue as follows, using the series formula \eqref{GPseries:eqn} for generalised Prabhakar integrals and the Riemann--Liouville version \eqref{product:RL} of the product rule:
\begin{align*}
\mathcal{E}_{\alpha,\beta;c+}^{\omega,\rho,\kappa}\big(f(x)g(x)\big)&=\sum_{n=0}^{\infty}\frac{\Gamma(\rho+\kappa n)\omega^n}{\Gamma(\rho)n!}\prescript{RL}{}I_{c+}^{\alpha n+\beta}\big(f(x)g(x)\big) \\
&=\sum_{n=0}^{\infty}\frac{\Gamma(\rho+\kappa n)\omega^n}{\Gamma(\rho)n!}\left[\sum_{m=0}^{\infty}\binom{-\alpha n-\beta}{m}\prescript{RL}{}I^{\alpha n+\beta+m}_{c+}f(x)\frac{\mathrm{d}^mg(x)}{\mathrm{d}x^m}\right] \\
&=\sum_{m=0}^{\infty}\frac{\mathrm{d}^mg(x)}{\mathrm{d}x^m}\left[\sum_{n=0}^{\infty}\frac{\Gamma(\rho+\kappa n)\omega^n}{\Gamma(\rho)n!}\binom{-\alpha n-\beta}{m}\prescript{RL}{}I^{\alpha n+\beta+m}_{c+}f(x)\right],
\end{align*}
which yields the required expression. To make this proof rigorous, we just need to verify local uniform convergence of the double series found above.

The following finite truncation of \eqref{product:RL} can be found as equation (2.199) in \cite{podlubny}, and we use it as our starting point:
\begin{equation}
\label{product:remainder}
\prescript{RL}{}D^{\alpha}_{c+}\big(f(x)g(x)\big)=\sum_{m=0}^{N}\binom{\alpha}{m}\prescript{RL}{}D^{\alpha-m}_{c+}f(x)\frac{\mathrm{d}^mg(x)}{\mathrm{d}x^m}-R_N^{\alpha}(x),\quad\alpha\in\mathbb{R},N\geq\alpha+1,
\end{equation}
where we assume $f\in C[a,x],g\in C^{N+1}[a,x]$, and the remainder term $R_N^{\alpha}(x)$ is defined by \[R_N^{\alpha}(x)=\frac{1}{N!\Gamma(-\alpha)}\int_c^x(x-y)^{-\alpha-1}f(y)\bigg[\int_{y}^xg^{(N+1)}(\xi)(y-\xi)^N\,\mathrm{d}\xi\bigg]\mathrm{d}y.\] Here we replace $\alpha$ in \eqref{product:remainder} by $-\alpha n-\beta$, and substitute the resulting expression into the series formula for generalised Prabhakar integrals:
\begin{align*}
\mathcal{E}_{\alpha,\beta;c+}^{\omega,\rho,\kappa}\big(f(x)g(x)\big)&=\sum_{n=0}^{\infty}\frac{\Gamma(\rho+\kappa n)\omega^n}{\Gamma(\rho)n!}\prescript{RL}{}I_{c+}^{\alpha n+\beta}\big(f(x)g(x)\big) \\
&=\sum_{n=0}^{\infty}\frac{\Gamma(\rho+\kappa n)\omega^n}{\Gamma(\rho)n!}\left[\sum_{m=0}^{N}\binom{-\alpha n-\beta}{m}\prescript{RL}{}D^{-\alpha n-\beta-m}_{c+}f(x)\frac{\mathrm{d}^mg(x)}{\mathrm{d}x^m}-R_N^{-\alpha n-\beta}(x)\right] \\
\begin{split}
&=\sum_{m=0}^{N}\frac{\mathrm{d}^mg(x)}{\mathrm{d}x^m}\left[\sum_{n=0}^{\infty}\frac{\Gamma(\rho+\kappa n)\omega^n}{\Gamma(\rho)n!}\binom{-\alpha n-\beta}{m}\prescript{RL}{}I^{\alpha n+\beta+m}_{c+}f(x)\right] \\
&\hspace{7cm}-\sum_{n=0}^{\infty}\frac{\Gamma(\rho+\kappa n)\omega^n}{\Gamma(\rho)n!}R_N^{-\alpha n-\beta}(x),
\end{split}
\end{align*}
where swapping the sums in the last step is justified because the sum over $m$ is finite and the sum over $n$ is locally uniformly convergent by Theorem \ref{GPseries}. The only thing left to prove now is that
\begin{equation}
\label{product:limit}
\lim_{N\rightarrow\infty}\sum_{n=0}^{\infty}\frac{\Gamma(\rho+\kappa n)\omega^n}{\Gamma(\rho)n!}R_N^{-\alpha n-\beta}(x)=0.
\end{equation}
To show this, we use an argument similar to that used in \cite{podlubny} and \cite{baleanu-fernandez}. Specifically, equation (2.201) from \cite{podlubny} tells us that \[R_N^{-\alpha n-\beta}(x)=\frac{(-1)^N(x-c)^{N+\alpha n+\beta+1}}{N!\Gamma(\alpha n+\beta)}\int_0^1\int_0^1f\big(c+\eta(x-c)\big)g^{(N+1)}\big(c+(\zeta+\eta-\zeta\eta)(x-c)\big)\,\mathrm{d}\eta\,\mathrm{d}\zeta,\] which means that
\begin{multline}
\sum_{n=0}^{\infty}\frac{\Gamma(\rho+\kappa n)\omega^n}{\Gamma(\rho)n!}R_N^{-\alpha n-\beta}(x) \\ =\frac{(-1)^N(x-c)^{N+\beta+1}}{N!}E_{\alpha,\beta}^{\rho,\kappa}\left(\omega(x-c)^{\alpha}\right)\int_0^1\int_0^1f\big(c+\eta(x-c)\big)g^{(N+1)}\big(c+(\zeta+\eta-\zeta\eta)(x-c)\big)\,\mathrm{d}\eta\,\mathrm{d}\zeta,
\end{multline}
where the integrand is independent of $n$. We can then ignore the generalised Mittag-Leffler function when taking the limit, because it is independent of $N$, and we find that \eqref{product:limit} holds just as in \cite{podlubny,baleanu-fernandez}.
\end{proof}

\begin{example}
As a simple application of the Prabhakar product rule, we apply the result of Theorem \ref{product:GP} with $f(x)=e^{ax}$ and $g(x)=x$ and the constant of differintegration $c=i\infty$.

In this case, the outer series in \eqref{product:GP:eqn} has only two non-trivial terms, namely $m=0$ and $m=1$, while the Riemann--Liouville fractional integral of an exponential function is well known. Thus the series becomes:
\begin{align*}
&{\color{white}=}\sum_{m=0}^{1}\frac{\mathrm{d}^mx}{\mathrm{d}x^m}\left[\sum_{n=0}^{\infty}\frac{\Gamma(\rho+\kappa n)\Gamma(1-\beta-\alpha n)\omega^n}{\Gamma(\rho)\Gamma(1-\beta-\alpha n-m)m!n!}a^{\alpha n+\beta+m}e^{ax}\right] \\
\begin{split}
&=x\left[\sum_{n=0}^{\infty}\frac{\Gamma(\rho+\kappa n)\Gamma(1-\beta-\alpha n)(\omega a^{\alpha})^n}{\Gamma(\rho)\Gamma(1-\beta-\alpha n)n!}a^{\beta}e^{ax}\right] \\
&\hspace{5cm}+\left[\sum_{n=0}^{\infty}\frac{\Gamma(\rho+\kappa n)\Gamma(1-\beta-\alpha n)(\omega a^{\alpha})^n}{\Gamma(\rho)\Gamma(-\beta-\alpha n)n!}a^{\beta+1}e^{ax}\right]
\end{split} \\
&=x\left[\sum_{n=0}^{\infty}\frac{\Gamma(\rho+\kappa n)(\omega a^{\alpha})^n}{\Gamma(\rho)n!}a^{\beta}e^{ax}\right]+\left[\sum_{n=0}^{\infty}\frac{\Gamma(\rho+\kappa n)(-\beta-\alpha n)(\omega a^{\alpha})^n}{\Gamma(\rho)n!}a^{\beta+1}e^{ax}\right] \\
&=\sum_{n=0}^{\infty}\frac{\Gamma(\rho+\kappa n)(\omega a^{\alpha})^n}{\Gamma(\rho)n!}(x-a\beta-a\alpha n)a^{\beta}e^{ax},
\end{align*}
and we have computed the Prabhakar fractional integral $\mathcal{E}_{\alpha,\beta}^{\omega,\rho,\kappa}\left(xe^x\right)$.
\end{example}

Naturally, Theorem \ref{product:GP} yields corollaries in the form of product rule analogues for other more specific fractional models, including \eqref{product:AB} for the AB model, by using Proposition \ref{GPcases}.

\section{Fractional extensions of the chain rule}

The chain rule is another fundamental result from classical calculus which can be generalised to certain models of fractional calculus. Another paper of Osler \cite{osler2} considered various generalisations of the chain rule to the Riemann--Liouville model; the following version follows directly from the fractional product rule \eqref{product:RL} and the classical Fa\`a di Bruno formula:
\begin{multline}
\label{chain:RL}
\prescript{RL}{}D^{\alpha}_{c+}f(g(x))=\frac{(x-c)^{-\alpha}}{\Gamma(1-\alpha)}f(g(x)) \\ +\sum_{m=1}^{\infty}\binom{\alpha}{m}\frac{(x-c)^{m-\alpha}}{\Gamma(m-\alpha+1)}\sum_{r=1}^m\frac{\mathrm{d}^rf(g(x))}{\mathrm{d}g(x)^r}\sum_{(P_1,\dots,P_m)}\Bigg[\prod_{j=1}^m\tfrac{j}{P_j!(j!)^{P_j}}\Big(\tfrac{\mathrm{d}^jg(x)}{\mathrm{d}x^j}\Big)^{P_j}\Bigg],\quad x,\alpha,c\in\mathbb{C},
\end{multline}
where $g$ is smooth and $f(g(x))$ is a function of the form $x^{\zeta}\xi(x)$ with $\mathrm{Re}(\zeta)>-1$ and $\xi$ holomorphic on a complex domain $U\ni c$, and the final summation is over the set
\begin{equation}
\label{chain:summation}
\{(P_1,\dots,P_m)\in\left(\mathbb{Z}^+_0\right)^m:\sum_jP_j=r,\sum_jjP_j=m\Big\}.
\end{equation}
Using the series formula for AB fractional derivatives, it is once again possible \cite{baleanu-fernandez} to derive from \eqref{chain:RL} a version of the fractional chain rule which is valid in the AB model. Here we shall use our new series formula to derive a similar result in the generalised Prabhakar model of fractional calculus.

\begin{theorem}
\label{chain:GP}
Let $f$ and $g$ be complex functions such that $g$ is smooth and $f(g(x))$ is a function of the form $x^{\zeta}\xi(x)$ with $\mathrm{Re}(\zeta)>-1$ and $\xi$ holomorphic on a complex domain $U\subset\mathbb{C}$. Then for any complex parameters $\alpha,\beta,\omega,\rho,\kappa$ satisfying the conditions \eqref{GGML:conditions}, the generalised Prabhakar operator satisfies the following version of the chain rule:
\begin{multline}
\label{chain:GP:eqn}
\mathcal{E}_{\alpha,\beta;c+}^{\omega,\rho,\kappa}\big(f(g(x))\big)=(x-c)^{\beta}\sum_{n=0}^{\infty}\frac{\Gamma(\rho+\kappa n)\Gamma(1-\beta-\alpha n)\left[\omega(x-c)^{\alpha}\right]^n}{\Gamma(\rho)\Gamma(\alpha n+\beta)n!}\sum_{m=0}^{\infty} \\
\frac{(c-x)^m}{m!(\alpha n+\beta+m)}\left[\sum_{r=1}^m\frac{\mathrm{d}^rf(g(x))}{\mathrm{d}g(x)^r}\sum_{(P_1,\dots,P_m)}\Bigg[\prod_{j=1}^m\tfrac{j}{P_j!(j!)^{P_j}}\Big(\tfrac{\mathrm{d}^jg(x)}{\mathrm{d}x^j}\Big)^{P_j}\Bigg]\right],
\end{multline}
where the summation over $(P_1,\dots,P_m)$ is over the set \eqref{chain:summation}.
\end{theorem}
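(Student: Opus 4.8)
The plan is to follow the template of the product-rule proof (Theorem~\ref{product:GP}) line for line, simply replacing the Riemann--Liouville product rule \eqref{product:RL} by the Riemann--Liouville chain rule \eqref{chain:RL}. First I would insert the series formula \eqref{GPseries:eqn} for the generalised Prabhakar operator, giving
\[
\mathcal{E}_{\alpha,\beta;c+}^{\omega,\rho,\kappa}\big(f(g(x))\big)=\sum_{n=0}^{\infty}\frac{\Gamma(\rho+\kappa n)\omega^n}{\Gamma(\rho)n!}\,\prescript{RL}{}I_{c+}^{\alpha n+\beta}\big(f(g(x))\big),
\]
and then rewrite each Riemann--Liouville fractional integral as a fractional derivative of negative order, $\prescript{RL}{}I_{c+}^{\alpha n+\beta}=\prescript{RL}{}D_{c+}^{-(\alpha n+\beta)}$, so that \eqref{chain:RL} applies to each summand with its order set equal to $-(\alpha n+\beta)$.

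Next I would simplify the coefficients produced by \eqref{chain:RL}. Substituting the order $-(\alpha n+\beta)$, the leading ($m=0$) term contributes $(x-c)^{\alpha n+\beta}/\Gamma(\alpha n+\beta+1)$ times $f(g(x))$, while the general term carries the factor $\binom{-(\alpha n+\beta)}{m}(x-c)^{m+\alpha n+\beta}/\Gamma(m+\alpha n+\beta+1)$. Writing the binomial coefficient as $\binom{-\alpha n-\beta}{m}=\Gamma(1-\beta-\alpha n)/\big(\Gamma(1-\beta-\alpha n-m)\,m!\big)$ and using the reflection identity $\Gamma(z)\Gamma(1-z)=\pi/\sin(\pi z)$ to recombine $\Gamma(1-\beta-\alpha n-m)$ with $\Gamma(m+\alpha n+\beta+1)$ reduces these factors to a ratio of Gamma functions multiplied by $1/(\alpha n+\beta+m)$, which is precisely the coefficient appearing in \eqref{chain:GP:eqn}. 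Collecting the powers of $(x-c)$ then lets me factor out $(x-c)^{\beta}$ and $[\omega(x-c)^{\alpha}]^n$, while the Fa\`a di Bruno sum over $(P_1,\dots,P_m)$ in \eqref{chain:summation} is carried through unchanged from \eqref{chain:RL}. Because the final arrangement keeps the Prabhakar index $n$ outermost and the chain-rule index $m$ inside, no interchange of the two series is required at the formal level, in contrast to the product-rule proof where the sums had to be swapped.

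The hard part, exactly as in Theorem~\ref{product:GP}, is justifying rigorously that the formal term-by-term application of \eqref{chain:RL} is legitimate, i.e.\ that the interchange of the chain-rule limit with the Prabhakar summation preserves the value. I would handle this with a remainder estimate in the spirit of \cite{podlubny} and \cite{baleanu-fernandez}: truncate the expansion \eqref{chain:RL} at order $m=N$, leaving an explicit integral remainder term $R_N^{-\alpha n-\beta}(x)$ of the same type used in the product-rule proof, substitute the truncated identity into the locally uniformly convergent Prabhakar series, and show that the Prabhakar-weighted remainder series $\sum_{n}\frac{\Gamma(\rho+\kappa n)\omega^n}{\Gamma(\rho)n!}R_N^{-\alpha n-\beta}(x)$ tends to $0$ as $N\to\infty$. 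As in the product-rule argument, I expect the $n$-dependence of the remainder to factor so that the generalised Mittag-Leffler function $E_{\alpha,\beta}^{\rho,\kappa}\!\left(\omega(x-c)^{\alpha}\right)$ pulls out as an $N$-independent factor, reducing the problem to the single vanishing estimate already established for the Riemann--Liouville chain rule under the holomorphy hypothesis on $f(g(x))$. Local uniform convergence of the resulting double series then follows from that of the Prabhakar series together with the convergence of the classical chain-rule expansion.
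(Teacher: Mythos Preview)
Your approach is correct, but it differs from the paper's in a way worth noting. The paper does \emph{not} apply the Riemann--Liouville chain rule \eqref{chain:RL} term by term inside the Prabhakar series. Instead it invokes the already-proved Prabhakar product rule (Theorem~\ref{product:GP}) on the trivial factorisation $f(g(x))\cdot I(x)$ with $I(x)=1$: the role of ``$f$'' in \eqref{product:GP:eqn} is taken by the constant function, so each $\prescript{RL}{}I_{c+}^{\alpha n+\beta+m}(1)$ is just the explicit power $(x-c)^{\alpha n+\beta+m}/\Gamma(\alpha n+\beta+m+1)$; after the reflection-formula simplifications you describe, one applies the \emph{classical} Fa\`a di Bruno formula to the ordinary derivatives $\tfrac{\mathrm{d}^m}{\mathrm{d}x^m}(f\circ g)$ as the very last step. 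The advantage is that all questions of convergence and interchange are already absorbed into the proof of Theorem~\ref{product:GP}, so no new remainder analysis is needed here.

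Your route---insert \eqref{GPseries:eqn}, apply \eqref{chain:RL} with order $-(\alpha n+\beta)$ to each summand, then control the truncation error $\sum_n \tfrac{\Gamma(\rho+\kappa n)\omega^n}{\Gamma(\rho)n!}R_N^{-\alpha n-\beta}(x)$---also works, and your expectation that the $n$-dependence of the remainder factors through $E_{\alpha,\beta}^{\rho,\kappa}(\omega(x-c)^{\alpha})$ is correct (since \eqref{chain:RL} is itself the product rule applied to $(f\circ g)\cdot 1$, the remainder has exactly the form used in Theorem~\ref{product:GP} with $g$ replaced by $f\circ g$). So your argument is sound; it is just more laborious, effectively re-deriving the convergence part of Theorem~\ref{product:GP} in a special case rather than citing it.
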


\begin{proof}
Our starting point is the result of Theorem \ref{product:GP}, which in this case we apply to the product of the two functions $f(g(x))$ and $I(x)=1$. The fractional differintegrals of the unit function $I$ are well known, so the expression \eqref{product:GP:eqn} becomes:
\begin{align*}
\mathcal{E}_{\alpha,\beta;c+}^{\omega,\rho,\kappa}\big(f(g(x))\big)&=\sum_{m=0}^{\infty}\frac{\mathrm{d}^m(f\circ g)}{\mathrm{d}x^m}\left[\sum_{n=0}^{\infty}\frac{\Gamma(\rho+\kappa n)\Gamma(1-\beta-\alpha n)\omega^n}{\Gamma(\rho)\Gamma(1-\beta-\alpha n-m)m!n!}\prescript{RL}{}I_{c+}^{\alpha n+\beta+m}(1)\right] \\
&=\sum_{m=0}^{\infty}\frac{\mathrm{d}^m(f\circ g)}{\mathrm{d}x^m}\left[\sum_{n=0}^{\infty}\frac{\Gamma(\rho+\kappa n)\Gamma(1-\beta-\alpha n)\omega^n}{\Gamma(\rho)\Gamma(1-\beta-\alpha n-m)m!n!}\cdot\frac{(x-c)^{\alpha n+\beta+m}}{\Gamma(\alpha n+\beta+m+1)}\right] \\
&=\sum_{m=0}^{\infty}\frac{\mathrm{d}^m(f\circ g)}{\mathrm{d}x^m}\left[\sum_{n=0}^{\infty}\frac{\Gamma(\rho+\kappa n)\Gamma(1-\beta-\alpha n)\left[\omega(x-c)^{\alpha}\right]^n(x-c)^{\beta+m}}{\Gamma(\rho)\frac{\pi(\alpha n+\beta+m)}{\sin\left(\pi(\alpha n+\beta+m)\right)}m!n!}\right] \\
&=(x-c)^{\beta}\sum_{m=0}^{\infty}\frac{\mathrm{d}^m(f\circ g)}{\mathrm{d}x^m}\left[\sum_{n=0}^{\infty}\frac{\Gamma(\rho+\kappa n)\Gamma(1-\beta-\alpha n)\left[\omega(x-c)^{\alpha}\right]^n}{\Gamma(\rho)\frac{\pi(\alpha n+\beta+m)}{\sin\left(\pi(\alpha n+\beta)\right)}n!}\cdot\frac{(c-x)^m}{m!}\right] \\
&=(x-c)^{\beta}\sum_{m=0}^{\infty}\frac{\mathrm{d}^m(f\circ g)}{\mathrm{d}x^m}\left[\sum_{n=0}^{\infty}\frac{\Gamma(\rho+\kappa n)\Gamma(1-\beta-\alpha n)\left[\omega(x-c)^{\alpha}\right]^n}{(\alpha n+\beta+m)\Gamma(\rho)\Gamma(\alpha n+\beta)n!}\cdot\frac{(c-x)^m}{m!}\right] \\
&=(x-c)^{\beta}\sum_{n=0}^{\infty}\frac{\Gamma(\rho+\kappa n)\Gamma(1-\beta-\alpha n)\left[\omega(x-c)^{\alpha}\right]^n}{\Gamma(\rho)\Gamma(\alpha n+\beta)n!}\sum_{m=0}^{\infty}\frac{\mathrm{d}^m(f\circ g)}{\mathrm{d}x^m}\cdot\frac{(c-x)^m}{m!(\alpha n+\beta+m)} \\
\begin{split}
&=(x-c)^{\beta}\sum_{n=0}^{\infty}\frac{\Gamma(\rho+\kappa n)\Gamma(1-\beta-\alpha n)\left[\omega(x-c)^{\alpha}\right]^n}{\Gamma(\rho)\Gamma(\alpha n+\beta)n!}\sum_{m=0}^{\infty} \\
&\hspace{2cm}\frac{(c-x)^m}{m!(\alpha n+\beta+m)}\left[\sum_{r=1}^m\frac{\mathrm{d}^rf(g(x))}{\mathrm{d}g(x)^r}\sum_{(P_1,\dots,P_m)}\Bigg[\prod_{j=1}^m\tfrac{j}{P_j!(j!)^{P_j}}\Big(\tfrac{\mathrm{d}^jg(x)}{\mathrm{d}x^j}\Big)^{P_j}\Bigg]\right],
\end{split}
\end{align*}
as required, where we used the classical Fa\`a di Bruno formula in the final step.

\end{proof}

Once again, we can use Theorem \ref{chain:GP} together with Proposition \ref{GPcases} in order to find analogues of the chain rule for other more specific fractional models, including the result already established in \cite{baleanu-fernandez} for the AB model.

We note that one application of Theorem \ref{chain:GP} would be to compute fractional differintegrals, in the various models covered by the umbrella of Prabhakar, of a Gaussian function $e^{-x^2}$. This follows from putting $f(x)=e^x$ and $g(x)=-x^2$ in the identity \eqref{chain:GP:eqn}.

\section{Iteration of fractional models}

In \cite{fernandez-baleanu}, the idea of \textit{iteration} was used to propose a new model of fractional calculus. Given a functional operator, we can consider iterating it a natural number of times, and then try to find a way of defining the $\nu$th iteration for $\nu$ not an integer. This is the basic idea underlying fractional calculus, and the same idea can be applied even to operators which are themselves interpreted as fractional differintegrals. Here we shall explore how the same methodology can be applied to operators in the Prabhakar model.

By Theorem \ref{PRABsemigp}, the iterated Prabhakar operator looks like the following:
\begin{equation*}
\left(\mathcal{E}_{\alpha,\beta;c+}^{\omega,\rho}\right)^nf=\mathcal{E}_{\alpha,n\beta;c+}^{\omega,n\rho}f,
\end{equation*}
for any $n\in\mathbb{N}$ and any $L^1$ function $f$. Thus, the natural definition of the fractionally iterated Prabhakar operator is:
\begin{equation}
\label{IPdef:int}
\left(\mathcal{E}_{\alpha,\beta;c+}^{\omega,\rho}\right)^{\nu}f=\mathcal{E}_{\alpha,\nu\beta;c+}^{\omega,\nu\rho}f,
\end{equation}
for $\nu\in\mathbb{C}$ such that $\mathrm{Re}(\nu\beta)>0$. If $\nu$ is such that the latter condition does not hold, then we can define the $\nu$th iteration of the Prabhakar integral in the same way as we defined the Prabhakar derivative \eqref{PRABdef:deriv}, which corresponds to the case $\nu=-1$. In other words, we create the following definition.

\begin{definition}
Under the usual conditions $\mathrm{Re}(\alpha)>0,\mathrm{Re}(\beta)>0$, and for any $\nu\in\mathbb{C}$, the $\nu$th iteration of the Prabhakar operator is defined as:
\begin{equation}
\label{IPdef}
\left(\mathcal{E}_{\alpha,\beta;c+}^{\omega,\rho}\right)^{\nu}f(x)\coloneqq\frac{\mathrm{d}^m}{\mathrm{d}x^m}\mathcal{E}_{\alpha,m+\nu\beta;c+}^{\omega,\nu\rho}f(x),\quad m\coloneqq\max(0,\lfloor\mathrm{Re}(-\nu\beta)\rfloor+1).
\end{equation}
We see that this equates with the formula \eqref{IPdef:int} when $\mathrm{Re}(\nu\beta)>0$. It also equates with the formula \eqref{PRABdef:deriv} for Prabhakar fractional derivatives when $\nu=-1$.
\end{definition}

Using the series formula from section 2, we can present a unified expression for the iterated Prabhakar differintegral, which applies equally well regardless of the sign of $\mathrm{Re}(\nu\beta)$.

\begin{theorem}
\label{IPseries}
For $\alpha,\beta,\omega,\rho,\nu\in\mathbb{C}$ with $\mathrm{Re}(\alpha)>0,\mathrm{Re}(\beta)>0$, the iterated Prabhakar differintegral defined by \eqref{IPdef} can be defined as
\begin{equation}
\label{IPseries:eqn}
\left(\mathcal{E}_{\alpha,\beta;c+}^{\omega,\rho}\right)^{\nu}f(x)=\sum_{n=0}^{\infty}\frac{\Gamma(\nu\rho+n)\omega^n}{\Gamma(\nu\rho)n!}\prescript{RL}{}I_{c+}^{\alpha n+\nu\beta}f(x),
\end{equation}
where the series on the right-hand side is locally uniformly convergent.
\end{theorem}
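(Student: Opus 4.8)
The plan is to reduce everything to the Prabhakar integral series of Corollary \ref{PRABseries} together with the standard composition rule for Riemann--Liouville differintegrals, so that no convergence machinery beyond that of Theorem \ref{GPseries} is required. First I would observe that the integer $m=\max(0,\lfloor\mathrm{Re}(-\nu\beta)\rfloor+1)$ is chosen precisely so that $\mathrm{Re}(m+\nu\beta)>0$; hence the Prabhakar integral $\mathcal{E}_{\alpha,m+\nu\beta;c+}^{\omega,\nu\rho}$ appearing in the definition \eqref{IPdef} genuinely satisfies the hypotheses of Corollary \ref{PRABseries}, and I may substitute the series \eqref{PRABseries:int} (with $\beta$ replaced by $m+\nu\beta$ and $\rho$ by $\nu\rho$) to write
\[
\left(\mathcal{E}_{\alpha,\beta;c+}^{\omega,\rho}\right)^{\nu}f(x)=\frac{\mathrm{d}^m}{\mathrm{d}x^m}\sum_{n=0}^{\infty}\frac{\Gamma(\nu\rho+n)\omega^n}{\Gamma(\nu\rho)n!}\prescript{RL}{}I_{c+}^{\alpha n+m+\nu\beta}f(x).
\]

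Then, assuming the differentiation may be taken inside the sum, each term transforms by the composition rule $\frac{\mathrm{d}^m}{\mathrm{d}x^m}\prescript{RL}{}I_{c+}^{\mu}=\prescript{RL}{}I_{c+}^{\mu-m}$ for Riemann--Liouville differintegrals (the same kind of composition property already invoked in the proof of the inverse-operator theorem above). With $\mu=\alpha n+m+\nu\beta$ this produces $\prescript{RL}{}I_{c+}^{\alpha n+\nu\beta}f(x)$ for every $n$, and the series collapses directly to the asserted formula \eqref{IPseries:eqn}. When $\mathrm{Re}(\nu\beta)>0$ we have $m=0$ and the claim is immediate from \eqref{PRABseries:int} with no differentiation at all, so the single formula covers both regimes.

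The main obstacle is the rigorous justification of moving $\frac{\mathrm{d}^m}{\mathrm{d}x^m}$ inside the summation, since the local uniform convergence guaranteed by Corollary \ref{PRABseries} controls only the function and not its derivatives. I would handle this by checking that the term-by-term differentiated series is itself locally uniformly convergent, after which the standard theorem on differentiation of a series applies, iterated $m$ times through the intermediate sums $\sum_n\prescript{RL}{}I_{c+}^{\alpha n+m-j+\nu\beta}f$ for $j=1,\dots,m$. The only delicacy is that when $\mathrm{Re}(\nu\beta)\le 0$ the operators $\prescript{RL}{}I_{c+}^{\alpha n+\nu\beta}$ for small $n$ are genuinely fractional derivatives rather than integrals; but only finitely many indices $n$ satisfy $\mathrm{Re}(\alpha n+\nu\beta)\le 0$, and for the infinite tail each summand is an ordinary fractional integral whose size is governed by $1/|\Gamma(\alpha n+\nu\beta)|$. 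That tail is precisely a shift of a generalised Mittag-Leffler series $E_{\alpha,\nu\beta}^{\nu\rho,1}$, which is entire in its argument for $\mathrm{Re}(\alpha)>0$ irrespective of the sign of $\mathrm{Re}(\nu\beta)$; hence the same estimate used in Theorem \ref{GPseries} yields local uniform convergence of the differentiated series, which both closes the interchange argument and simultaneously establishes the stated local uniform convergence of \eqref{IPseries:eqn}.
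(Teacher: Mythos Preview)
Your proposal is correct and follows essentially the same route as the paper: split into the cases $\mathrm{Re}(\nu\beta)>0$ and $\mathrm{Re}(\nu\beta)\le 0$, in each case substitute the series \eqref{PRABseries:int} into the definition \eqref{IPdef}, and then pass $\tfrac{\mathrm{d}^m}{\mathrm{d}x^m}$ through the sum using the Riemann--Liouville composition rule. In fact you are more careful than the paper, which performs the interchange of differentiation and summation without comment; your tail estimate via the entire Mittag-Leffler series is a reasonable way to justify both that interchange and the stated local uniform convergence.
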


\begin{proof}
For $\mathrm{Re}(\nu\beta)>0$, the result follows directly from substituting the series \eqref{PRABseries:int} into the identity \eqref{IPdef:int} which is the integral incarnation of the definition \eqref{IPdef}.

For $\mathrm{Re}(\nu\beta)\leq0$, the parameter $n$ defined in \eqref{IPdef} is strictly positive, and so substituting the series \eqref{PRABseries:int} into \eqref{IPdef} yields:
\begin{align*}
\left(\mathcal{E}_{\alpha,\beta;c+}^{\omega,\rho}\right)^{\nu}f(x)&=\frac{\mathrm{d}^m}{\mathrm{d}x^m}\mathcal{E}_{\alpha,m+\nu\beta;c+}^{\omega,\nu\rho}f(x) \\
&=\frac{\mathrm{d}^m}{\mathrm{d}x^m}\left(\sum_{n=0}^{\infty}\frac{\Gamma(\nu\rho+n)\omega^n}{\Gamma(\nu\rho)n!}\prescript{RL}{}I_{c+}^{\alpha n+m+\nu\beta}f(x)\right) \\
&=\sum_{n=0}^{\infty}\frac{\Gamma(\nu\rho+n)\omega^n}{\Gamma(\nu\rho)n!}\prescript{RL}{}I_{c+}^{\alpha n+\nu\beta}f(x),
\end{align*}
as required.
\end{proof}

We emphasise that using a series formula enables the definitions of fractional derivatives and integrals to be merged into a single unified expression for fractional differintegrals. The same phenomenon has been observed with Grunwald--Letnikov differintegrals, and also with the standard Prabhakar operators in our remark after Corollary \ref{PRABseries}. This is one reason why convergent series expressions have been so important in the study of fractional calculus.

Another important property of the iterated Prabhakar differintegral is that it satisfies a semigroup property in $\nu$, as shown by the following result.

\begin{theorem}[Semigroup property]
\label{IPsemigp}
Given parameters $\alpha,\beta,\omega,\rho\in\mathbb{C}$ with $\mathrm{Re}(\alpha)>0,\mathrm{Re}(\beta)>0$, we have the following identity valid for any $L^1$ function $f$:
\begin{equation}
\label{IPsemigp:eqn}
\left(\mathcal{E}_{\alpha,\beta;c+}^{\omega,\rho}\right)^{\mu}\left(\mathcal{E}_{\alpha,\beta;c+}^{\omega,\rho}\right)^{\nu}f(x)=\left(\mathcal{E}_{\alpha,\beta;c+}^{\omega,\rho}\right)^{\mu+\nu}f(x),\quad\mu,\nu\in\mathbb{C},\mathrm{Re}(\nu\beta)>0.
\end{equation}
\end{theorem}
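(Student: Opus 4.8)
The plan is to reduce the entire statement to the Prabhakar semigroup property of Theorem~\ref{PRABsemigp} together with the unified series formula of Theorem~\ref{IPseries}, keeping careful track of which iterations are genuine integrals and which involve an integer-order derivative. First I would exploit the hypothesis $\mathrm{Re}(\nu\beta)>0$: by the integral incarnation \eqref{IPdef:int} of the definition, this means the inner operator is an honest Prabhakar integral, so that $\left(\mathcal{E}_{\alpha,\beta;c+}^{\omega,\rho}\right)^{\nu}f=\mathcal{E}_{\alpha,\nu\beta;c+}^{\omega,\nu\rho}f$. No sign restriction is placed on $\mu$, so the outer operator must be handled through its general definition \eqref{IPdef} rather than \eqref{IPdef:int}.

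Next I would apply the outer operator uniformly via \eqref{IPdef}: writing $m\coloneqq\max(0,\lfloor\mathrm{Re}(-\mu\beta)\rfloor+1)$ (with the subcase $\mathrm{Re}(\mu\beta)>0$ giving $m=0$, so that the derivative degenerates to the identity), we obtain
\[
\left(\mathcal{E}_{\alpha,\beta;c+}^{\omega,\rho}\right)^{\mu}\mathcal{E}_{\alpha,\nu\beta;c+}^{\omega,\nu\rho}f=\frac{\mathrm{d}^m}{\mathrm{d}x^m}\mathcal{E}_{\alpha,m+\mu\beta;c+}^{\omega,\mu\rho}\mathcal{E}_{\alpha,\nu\beta;c+}^{\omega,\nu\rho}f.
\]
By the choice of $m$ we have $\mathrm{Re}(m+\mu\beta)>0$, and we already know $\mathrm{Re}(\nu\beta)>0$, so the two inner Prabhakar integrals meet the hypotheses of Theorem~\ref{PRABsemigp}; applying it collapses them, leaving
\[
\left(\mathcal{E}_{\alpha,\beta;c+}^{\omega,\rho}\right)^{\mu}\mathcal{E}_{\alpha,\nu\beta;c+}^{\omega,\nu\rho}f=\frac{\mathrm{d}^m}{\mathrm{d}x^m}\mathcal{E}_{\alpha,m+(\mu+\nu)\beta;c+}^{\omega,(\mu+\nu)\rho}f.
\]

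The final step is to recognise the right-hand side as $\left(\mathcal{E}_{\alpha,\beta;c+}^{\omega,\rho}\right)^{\mu+\nu}f$. I would do this by expanding $\mathcal{E}_{\alpha,m+(\mu+\nu)\beta;c+}^{\omega,(\mu+\nu)\rho}f$ via the series \eqref{PRABseries:int}, differentiating term by term (justified by the local uniform convergence already established in Theorem~\ref{IPseries}), and using the elementary composition relation $\frac{\mathrm{d}^m}{\mathrm{d}x^m}\prescript{RL}{}I_{c+}^{\alpha n+m+(\mu+\nu)\beta}f=\prescript{RL}{}I_{c+}^{\alpha n+(\mu+\nu)\beta}f$; the resulting series is exactly \eqref{IPseries:eqn} with $\nu$ replaced by $\mu+\nu$.

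I expect the main obstacle to be precisely this last identification, because the integer $m$ produced above comes from $\mu$ alone and need \emph{not} equal the canonical integer $\max(0,\lfloor\mathrm{Re}(-(\mu+\nu)\beta)\rfloor+1)$ appearing in the definition of the $(\mu+\nu)$th iteration. What rescues the argument is the independence of the series formula from the particular integer order of differentiation chosen --- the same phenomenon that makes the Riemann--Liouville derivative well defined and that underlies the proof of Theorem~\ref{IPseries} --- so that any sufficiently large integer order produces the same locally uniformly convergent series, and hence the same operator, regardless of the sign of $\mathrm{Re}((\mu+\nu)\beta)$.
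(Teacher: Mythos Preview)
Your proposal is correct and follows essentially the same route as the paper: the paper's proof is a single sentence citing \eqref{IPdef:int} and Theorem~\ref{PRABsemigp}, and your argument is precisely a careful unpacking of that sentence. The extra work you do---handling general $\mu$ via the definition \eqref{IPdef} with the integer $m$, and then using the series formula of Theorem~\ref{IPseries} to reconcile the possibly ``wrong'' value of $m$ with the canonical one for $\mu+\nu$---fills in details the paper leaves implicit, but does not constitute a different approach.
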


\begin{proof}
This follows from the formula \eqref{IPdef:int} for iterated Prabhakar integrals and the result of Theorem \ref{PRABsemigp}.
\end{proof}

\section{Conclusions}

We proved in this paper that the Prabhakar fractional model and its generalised forms can be reduced to series involving only Riemann--Liouville integrals. This basic result is due to the fact that the Mittag-Leffler function and its generalised versions have nice series representations which are locally uniformly convergent.

These results are significant for fractional calculus from a philosophical point of view: they demonstrate that several alternative models of fractional differintegrals can be written in terms of only the classical model. They are also practically significant, since they enable or streamline the proofs of many fundamental results about the Prabhakar and other models of fractional calculus: not only previously-known facts such as semigroup and inverse properties, but also new results such as product and chain rules.

One reason for the power of the Prabhakar model is that it satisfies a semigroup property in some of the parameters. We have demonstrated how this fact can be linked to the idea of fractional iteration, which is an important concept to consider in fractional calculus.

\section*{Acknowledgements}

The authors would like to thank both the editor and the anonymous referees for their helpful comments and suggestions.

\end{document}